\newtheorem{theorem}{Theorem}[section]
\newtheorem{example}[theorem]{Example}
\newtheorem{proposition}[theorem]{Proposition}
\newtheorem{remark}[theorem]{Remark}
\newenvironment{proof}[1][Proof]{\noindent\textbf{#1.} }{\ \rule{0.5em}{0.5em}}
\numberwithin{equation}{section}
\begin{document}

\title{Unified a-priori estimates for minimizers under $p,q-$growth and
exponential growth}
\author{Paolo Marcellini$^{1}$, Antonella Nastasi$^{2}$, Cintia Pacchiano
Camacho$^{3}$ \\
%EndAName
\medskip \\
{\normalsize $^{1}$Dipartimento di Matematica e Informatica
\textquotedblleft U. Dini\textquotedblright , Universit\`{a} di Firenze}\\
{\normalsize Viale Morgagni 67/A, 50134 - Firenze, Italy}\\
{\normalsize paolo.marcellini@unifi.it}\\
{\normalsize $^{2}$Department of Engineering, University of Palermo}\\
{\normalsize Viale delle Scienze, 90128 - Palermo, Italy}\\
{\normalsize antonella.nastasi@unipa.it}\\
{\normalsize $^{3}$ Theoretical Sciences Visiting Program,}\\{\normalsize Okinawa Institute of Science and Technology Graduate
University}\\
{\normalsize Onna, 904-0495, Japan}\\
{\normalsize cintia.pacchiano@oist.jp }}
\date{}
\maketitle

\begin{abstract}
%Please, addresses to be checked, in particular Cintia's address: is it in
%Canada or in Japan?

%Abstract to be read again. Here a "tentative abstract".

We propose some \textit{general growth conditions} on the function $%
f=f\left( x,\xi \right) $, including the so-called \textit{natural growth},
or \textit{polynomial}, or $p,q-$\textit{growth conditions}, or even \textit{%
exponential growth}, in order to obtain that any local minimizer of the
energy integral $\;\int_{\Omega }f\left( x,Du\right) dx\,$\ is \textit{%
locally Lipschitz continuous} in $\Omega $. In fact this is the fundamental
step for further regularity: the \textit{local boundedness of the gradient}
of any Lipschitz continuous local minimizer \textit{a-posteriori} makes
irrelevant the behavior of the integrand $f\left( x,\xi \right) $ as $%
\left\vert \xi \right\vert \rightarrow +\infty $; i.e., the \textit{general
growth conditions} a posteriori are reduced to a standard growth, with the
possibility to apply the classical regularity theory. In other words, we
reduce some classes of \textit{non-uniform} elliptic variational problems to
a context of \textit{uniform} ellipticity.\newpage
\end{abstract}

\tableofcontents

\bigskip

\bigskip

\bigskip

\emph{Key words}: Non-uniform variational problems, Regularity of local
minimizers, Local Lipschitz continuity, Higher differentiability,
Exponential growth conditions, General growth conditions, p,q-growth
conditions.

\emph{Mathematics Subject Classification (2020)}: Primary: 35D30, 35J15,
35J60, 49N60; Secondary: 35B45.

\section{Introduction}

Let $\Omega $ be an open set in $\mathbb{R}^{n}$, $n\geq 2$, and $x\in
\Omega \subset \mathbb{R}^{n},\xi \in \mathbb{R}^{n}$ be generic vectors.
Let $f=f\left( x,\xi \right) $, $f:\Omega \times \mathbb{R}^{n}\rightarrow 
\mathbb{R}$ be a continuous function in $\Omega \times \mathbb{R}^{n}$. The
related \textit{energy integral} where to look for local minimizers is%
\begin{equation}
v\;\rightarrow \;\int_{\Omega }f\left( x,Dv\right) \,dx\,.
\label{energy integral}
\end{equation}%
As well known a \textit{local minimizer} of the energy integral (\ref{energy
integral}) is a Sobolev function $u$ such that $f\left( x,Du\right) \in L_{%
\mathrm{loc}}^{1}\left( \Omega \right) $ and $\int_{\Omega }f\left(
x,Du\right) \,dx\leq \int_{\Omega }f\left( x,Du+D\varphi \right) \,dx$ for
every Sobolev function $\varphi $ whose support is contained in $\Omega $.

In this manuscript we allow examples of \textit{anisotropic polynomial growth%
}, a class of variational energy integrals which may have singular (even not
bounded) local minimizers. Such as for instance 
\begin{equation*}
f\left( x,Dv\right) =\sum\limits_{i,j=1}^{n}a_{ij}\left( x\right)
v_{x_{i}}u_{x_{j}}+\left\vert v_{x_{n}}\right\vert ^{q},
\end{equation*}%
with $q\geq 2$ and $\left( a_{ij}\right) $ being an $n\times n$ a positive
definite matrix of locally Lipschitz continuous coefficients in $\Omega $.
We also allow \textit{exponential growth} such as, for example, 
\begin{equation*}
f\left( x,Dv\right) =\exp (a\left( x\right) \left\vert Dv\right\vert ^{2}).
\end{equation*}%
For these examples \textit{ellipticity} holds, but \textit{not uniform
ellipticity} as in the statement (\ref{uniform ellipticity conditions})
below.

Our aim is to propose some \textit{general growth conditions} on the
function $f=f\left( x,\xi \right) $ (including the previous polynomial and
exponential examples) in order to obtain that any \textit{local minimizer }%
is \textit{locally Lipschitz continuous} in $\Omega $. In fact this one is
the fundamental property for a local minimizer for further regularity: the 
\textit{local boundedness of the gradient} of any Lipschitz continuous local
minimizer \textit{a-posteriori} makes irrelevant the behavior of the
integrand $f\left( x,\xi \right) $ as $\left\vert \xi \right\vert
\rightarrow +\infty $; i.e., the \textit{general growth conditions} a
posteriori are reduced to a standard growth, with the possibility to apply
the classical regularity theory, valid in the uniform elliptic contexts and
obtain, when possible, $C^{1,\alpha }-$regularity as a consequence. In fact,
having in force the local Lipschitz continuity considered in this
manuscript, \textit{also the }$C^{1,\alpha }$\textit{\ regularity can be
deduced} under the same assumptions made in the context of natural growth,
for instance as in Lady\v{z}enskaja-Ural'ceva \cite[Chapter 4, Section 61]%
{Ladyzhenskaja-Uralceva 1968} or in Giusti \cite[Sections 8.6 and 8.8]%
{Giusti 2003 book}; see the $p,q-$growth cases in \cite[Section 7, Theorem D]%
{Marcellini ARMA 1989} and in \cite[Corollary 2.2]{Marcellini 1991}. See
also the recent non-uniformly elliptic approach and gradient H\"{o}lder
continuity by DeFilippis-Mingione \cite[Section 6.4]{DeFilippis-Mingione Inv
2023}.

The \textit{convexity} with respect to the gradient variable $\xi \in 
\mathbb{R}^{n}$ is the classical assumption on the function $f=f\left( x,\xi
\right) $ when we consider existence of minimizers, once the class of
competing $v$ in (\ref{energy integral}) is fixed at the boundary $\partial
\Omega $. The corresponding convexity condition for a $C^{2}-$function $\xi
\rightarrow f\left( x,\xi \right) $, as well known, is related to the $%
n\times n$ matrix of second derivatives $(f_{\xi _{i}\xi _{j}})_{n\times n}$%
\ and it is the \textit{positivity of the quadratic form} $\lambda \in 
\mathbb{R}^{n}\rightarrow \sum_{i,j}\,f_{\xi _{i}\xi _{j}}\left( x,\xi
\right) \lambda _{i}\lambda _{j}$. When a \textit{strict qualified positivity%
} is required, we have the \textit{ellipticity condition} $%
\sum_{i,j}\,f_{\xi _{i}\xi _{j}}\left( x,\xi \right) \lambda _{i}\lambda
_{j}\geq m\left\vert \lambda \right\vert ^{2}$, valid for all $\lambda ,\xi
\in \mathbb{R}^{n}$, $x\in \Omega $ and for some positive constant $m$. More
precisely, often it is useful to emphasize the dependence of $m$ on the
gradient variable $\xi $; in fact more properly in general the constant $m$
is replaced by a function of $\xi $. In this manuscript we assume the
following \textit{ellipticity condition} 
\begin{equation*}
\sum_{i,j}\,f_{\xi _{i}\xi _{j}}\left( x,\xi \right) \lambda _{i}\lambda
_{j}\geq g_{1}\left( \left\vert \xi \right\vert \right) \left\vert \lambda
\right\vert ^{2},\;\;\;\;\;\forall \;\lambda ,\xi \in \mathbb{R}^{n},x\in
\Omega ,
\end{equation*}%
where $g_{1}:\left[ 0,+\infty \right) \rightarrow \left[ 0,+\infty \right) $
is a nonnegative increasing function. Less standard is the bound from above
for the $n\times n$ matrix of second derivatives $(f_{\xi _{i}\xi
_{j}})_{n\times n}$. The classical case is the \textit{uniformly elliptic}
one. This happens when a similar bound exists from above too: 
\begin{equation}
\text{\textit{uniformly ellipticity}:}\;\;\;g_{1}\left( \left\vert \xi
\right\vert \right) \left\vert \lambda \right\vert ^{2}\leq
\sum_{i,j=1}^{n}\,f_{\xi _{i}\xi _{j}}\left( x,\xi \right) \lambda
_{i}\lambda _{j}\leq Mg_{1}\left( \left\vert \xi \right\vert \right)
\left\vert \lambda \right\vert ^{2},  \label{uniform ellipticity conditions}
\end{equation}%
for all $\lambda ,\xi \in \mathbb{R}^{n}$, $x\in \Omega $, and for a
constant $M\geq 1$. The main example is the $p-$Laplacian; i.e., when the
integral in (\ref{energy integral}) is the $p-$\textit{Dirichlet integral},
with $f\left( \xi \right) =\left\vert \xi \right\vert ^{p}$. In this case a
computation shows 
\begin{equation*}
\sum_{i,j=1}^{n}\,f_{\xi _{i}\xi _{j}}\left( \xi \right) \lambda _{i}\lambda
_{j}=p\,[\left\vert \xi \right\vert ^{2}\left\vert \lambda \right\vert
^{2}+\left( p-2\right) (\sum_{i=1}^{n}\xi _{i}\lambda
_{i})^{2})]\,\left\vert \xi \right\vert ^{p-4}
\end{equation*}%
and thus the validity of the uniformly elliptic estimates 
\begin{equation*}
p-\text{\textit{Laplacian}:}\;\;\;p\left\vert \xi \right\vert
^{p-2}\left\vert \lambda \right\vert ^{2}\leq \sum_{i,j}\,f_{\xi _{i}\xi
_{j}}\left( \xi \right) \lambda _{i}\lambda _{j}\leq p\left( p-1\right)
\left\vert \xi \right\vert ^{p-2}\left\vert \lambda \right\vert ^{2},
\end{equation*}%
for all $\lambda ,\xi \in \mathbb{R}^{n}$, $x\in \Omega $, when $p\geq 2$;
otherwise, if $1<p<2$, it is necessary to interchange the first and the last
sides.

Let us go back to what we said at the very beginning of this introduction.
It is known that perturbations of the $p-$\textit{Dirichlet integral }$%
f\left( \xi \right) =\left\vert \xi \right\vert ^{p}$, $\xi =\left( \xi
_{1},\xi _{2},\ldots ,\xi _{n}\right) $, for instance with either $f\left(
\xi \right) =\sum_{i=1}^{n}\,\left\vert \xi _{i}\right\vert ^{p_{i}}$, or $%
f\left( \xi \right) =\left\vert \xi \right\vert ^{p}+\left\vert \xi
_{n}\right\vert ^{q}$ with $p<q$, give rise to energy integrals as in (\ref%
{energy integral}) which may admit not smooth local minimizers, even
unbounded local minimizers; see for instance \cite[Theorem 6.1]{Marcellini
1991}. In these cases the uniform ellipticity condition (\ref{uniform
ellipticity conditions}) does not hold. In the non-uniformly elliptic case
regularity of local minima may not be true.

Another non-uniformly elliptic case which we like to emphasize is the
exponential one, for instance with $f\left( \xi \right) =e^{\left\vert \xi
\right\vert ^{2}}$. In this example a local minimizer of the corresponding
energy integral in (\ref{energy integral}) a-priori not necessarily
satisfies the Euler's differential equation. The reason is due to the fact
that if $u$ is a local minimizer, with $\int_{\Omega }e^{\left\vert
Du\right\vert ^{2}}\,dx<+\infty $, then to obtain the first variation and
the Euler's equation, for any fixed test function $\varphi $ we need to
compute the limit, as $h\rightarrow 0$ ($h\in \mathbb{R}$, $h\neq 0$), of
the difference quotient 
\begin{equation*}
\tfrac{1}{h}(\,\int_{\Omega }e^{\left\vert D\left( u+h\varphi \right)
\right\vert ^{2}}\,dx-\int_{\Omega }e^{\left\vert Du\right\vert
^{2}}\,dx\,)\,;
\end{equation*}%
if $\varphi =\eta u$ and $\eta =1$ on a subset of $\Omega $ (as usually it
happens in the regularity approach), i.e. if $\varphi =u$ and $h>0$ on a
subset $\Omega ^{\prime }$ of $\Omega $, then $\int_{\Omega ^{\prime
}}e^{\left\vert D\left( u+h\varphi \right) \right\vert
^{2}}\,dx=\int_{\Omega ^{\prime }}e^{\left( 1+h\right) ^{2}\left\vert
Du\right\vert ^{2}}\,dx$; thus the summability of $e^{\left\vert
Du\right\vert ^{2}}$ in general does not implies the summability of $%
e^{\left( 1+h\right) ^{2}\left\vert Du\right\vert ^{2}}$ because of the
factor $\left( 1+h\right) ^{2}>1$ and is not possible to go directly to the
Euler's equation. An energy integral of exponential type in general is more
difficult to be studied from the regularity point of view. The exponential
energy integrals are not uniformly elliptic and sometime they need to be
treated with appropriate specific techniques (see Cellina-Staicu \cite%
{Cellina-Staicu 2018}).

In this manuscript we propose a unified approach to regularity for
minimizers of nonuniformly elliptic energy integral, which include \textit{%
exponential growth} and $p,q-$\textit{growth}. This last case is related to
the functions $g_{1}$,$g_{2}$ below in (\ref{nonuniformly elliptic
conditions}), when they are powers with exponents $p,q$; more precisely, $%
g_{1}$,$g_{2}$ being compared with the second derivatives of $f$ with
respect to the gradient variable $\xi $, the powers to compare $g_{1}$,$%
g_{2} $ respectively are $\left\vert \xi \right\vert ^{p-2}$ and $%
1+\left\vert \xi \right\vert ^{q-2}$, or $(1+\left\vert \xi \right\vert
^{2})^{\left( q-2\right) /2}$. More in general we consider the 
\begin{equation}
\text{\textit{non-uniformly elliptic case}:}\;\;g_{1}\left( \left\vert \xi
\right\vert \right) \left\vert \lambda \right\vert ^{2}\leq
\sum_{i,j=1}^{n}\,f_{\xi _{i}\xi _{j}}\left( x,\xi \right) \lambda
_{i}\lambda _{j}\leq g_{2}\left( \left\vert \xi \right\vert \right)
\left\vert \lambda \right\vert ^{2},
\label{nonuniformly elliptic conditions}
\end{equation}%
for all $\lambda ,\xi \in \mathbb{R}^{n}$, $x\in \Omega $, with $g_{1},g_{2}:%
\left[ 0,+\infty \right) \rightarrow \left[ 0,+\infty \right) $ nonnegative
increasing functions. Of course it is necessary to impose conditions on
these functions $g_{1},g_{2}$ in order to obtain a-priori estimates for
regularity. Full details are stated in Section \ref{Section of the statement}%
, together with the assumption of the local Lipschitz continuity in $\Omega
\times \mathbb{R}^{n}$ of the gradient with respect to $\xi \in \mathbb{R}%
^{n}$ of $f=f\left( x,\xi \right) $; i.e. the local Lipschitz continuity of $%
D_{\xi }f\left( x,\xi \right) =\left( f_{\xi _{i}}\left( x,\xi \right)
\right) _{i=1,2,\ldots ,n}$.

Nowadays many papers deal with regularity for non-uniformly elliptic
problems. Limited to the more recent literature, for interior regularity we
refer to Colombo-Mingione \cite{Colombo-Mingione 2015a}-\cite%
{Colombo-Mingione 2016}, Baroni-Colombo-Mingione \cite%
{Baroni-Colombo-Mingione 2018}, Eleuteri-Marcellini-Mascolo \cite%
{Eleuteri-Marcellini-Mascolo 2020}, Bousquet-Brasco \cite{Bousquet-Brasco
2020}, DeFilippis-Mingione \cite{DeFilippis-Mingione 2020}-\cite%
{DeFilippis-Mingione Inv 2023}, Mingione-R\u{a}du\-lescu \cite%
{Mingione-Radulescu 2021}, B\"{o}gelein-Duzaar-Giova-Passarelli-Scheven \cite%
{Boegelein-Duzaar-Giova-Passarelli-Scheven 2023}; see also \cite%
{Cupini-Marcellini-Mascolo 2018}-\cite{Cupini-Marcellini-Mascolo-Passarelli
2023} and \cite{Duzgun-Marcellini-Vespri 2014 Parma}-\cite%
{Eleuteri-Perrotta-Treu 2025}. About recent boundary regularity under
general growth conditions we mention Cianchi-Maz'ya \cite{Cianchi-Mazya 2011}%
,\cite{Cianchi-Mazya 2014}, B\"{o}gelein-Duzaar-Marcellini-Scheven \cite%
{Boegelein-Duzaar-Marcellini-Scheven JMPA 2021}, DeFilippis-Piccinini \cite%
{Defilippis-Piccinini 2022-2023}. For \textit{Orlicz-Sobolev spaces,
variable exponents} and \textit{double phase} see
Diening-Harjulehto-Hasto-Ruzicka \cite{Diening-Harjulehto-Hasto-Ruzicka 2011}%
, Chlebicka \cite{Chlebicka 2018}, Chlebicka-DeFilippis \cite%
{Chlebicka-DeFilippis 2019}, Byun-Oh \cite{Byun-Oh 2020}, Ragusa-Tachikawa 
\cite{Ragusa-Tachikawa 2020}, H\"{a}st\"{o}-Ok \cite{Hasto-Ok 2022},
Crespo-Blanco-Gasi\'{n}ski-Winkert \cite{Crespo-Blanco-Gasinski-Winkert 2024}%
. Higher integrability and stability of $p,q-$quasi-minimizers in double
phase problems by Kinnunen-Nastasi-Pacchiano Camacho \cite%
{Kinnunen-Nastasi-Pacchiano Camacho 2024} and Nastasi-Pacchiano Camacho \cite%
{Nastasi-Pacchiano Camacho 2021}-\cite{Nastasi-Pacchiano Camacho 2024}. 
\textit{Quasiconvex integrals} of the calculus of variations in \cite%
{Boegelein-Dacorogna-Duzaar-Marcellini-Scheven 2020},\cite%
{Dacorogna-Marcellini 1998},\cite{Marcellini 1984}, and about \textit{%
partial regularity} by Schmidt \cite{Schmidt 2009}, DeFilippis \cite{De
Filippis JMPA 2022}, DeFilippis-Stroffolini \cite{De Filippis-Stroffolini
2023}, Gmeineder-Kristensen \cite{Gmeineder-Kristensen 2022}.

Particularly related to this manuscript, the literature on regularity for 
\textit{non-uniformly elliptic problems}, which consider at the same time 
\textit{exponential growth}, is less wide. It starts from \cite[Section 6]%
{Marcellini 1993} for a class of non-uniformly elliptic equations including 
\textit{``slow"} exponential growth. The first regularity result specific
for local minimizers in $u\in W_{\mathrm{loc}}^{1,2}\left( \Omega \right) $
such that $f\left( Du\right) \in L_{loc}^{1}\left( \Omega \right) $,
possibly with exponential growth, is in \cite{Marcellini JOTA 1996}, which
deals with energy integrals as in (\ref{energy integral}) with integrand $%
f=f\left( \xi \right) $ independent of $x$, who partially inspired our
research here. In the same year an approach to \textit{the vector-valued case%
} was introduced in \cite{Marcellini Everywhere 1996}, later generalized by
Marcellini-Papi \cite{Marcellini-Papi 2006}, both again related to $%
f=f\left( \xi \right) $ independent of $x$. The first extensions to $%
f=f\left( x,\xi \right) $ are due to Mascolo-Migliorini \cite%
{Mascolo-Migliorini 2003} and DiMarco-Marcellini \cite{DiMarco-Marcellini
2020}, who treated \textit{the vector-valued case}, with $f:=g\left(
x,\left\vert \xi \right\vert \right) $ of \textit{Uhlenbeck-type} \cite%
{Uhlenbeck 1977}, i.e. depending on the modulus $\left\vert \xi \right\vert $%
. Finally we mention Beck-Mingione \cite{Beck-Mingione 2019}, who studied
energy integrals of the form $\int_{\Omega }\left\{ g\left( \left\vert
Du\right\vert \right) +h\left( x\right) \cdot u\right\} dx$ and they
considered some sharp assumptions on the function $h\left( x\right) $, of
the type $h\in L\left( n,1\right) \left( \Omega ;\mathbb{R}^{m}\right) $ in
dimension $n>2$ (i.e., $\int_{0}^{+\infty }\mathrm{{meas}\left\{ x\in \Omega
:\left\vert h\left( x\right) \right\vert >\lambda \right\} ^{1/n}d\lambda
<+\infty }$; note that $L^{n+\varepsilon }\subset L\left( n,1\right) \subset
L^{n}$), or $h\in L^{2}\left( \log L\right) ^{\alpha }\left( \Omega ;\mathbb{%
R}^{m}\right) $ for some $\alpha >2$ when $n=2$. Beck-Mingione obtained the
local boundedness of the gradient $Du$ allowing exponential growth too;
however the function $g\left( \left\vert \xi \right\vert \right) $ is
assumed to be depending on the modulus $\left\vert \xi \right\vert $ of $\xi 
$ and independent of $x$. Therefore the a-priori estimates of Theorem \ref%
{Lipschitz continuity result} below is new with respect to the known
literature, and in particular with respect to the quoted references.

\section{Statement of the main results\label{Section of the statement}}

We consider a function $f=f\left( x,\xi \right) $, $f:\Omega \times \mathbb{R%
}^{n}\rightarrow \mathbb{R}$, whose gradient with respect to the $\xi -$%
variable $D_{\xi }f\left( x,\xi \right) =\left( f_{\xi _{i}}\left( x,\xi
\right) \right) _{i=1,2,\ldots ,n}$ is locally Lipschitz continuous in $%
\Omega \times \mathbb{R}^{n}$. We assume that the following second partial
derivatives of $f$ (which exist in the $W_{\mathrm{loc}}^{1,\infty }-$sense)
satisfy the following \textit{ellipticity} and \textit{growth conditions:}
for every open set $\Omega ^{\prime }$ compactly contained in $\Omega $
there exist nonnegative increasing functions $g_{1},g_{2},g_{3}:\left[
0,+\infty \right) \rightarrow \left[ 0,+\infty \right) $, not identically
equal to zero, such that 
\begin{equation}
g_{1}\left( \left\vert \xi \right\vert \right) \,\left\vert \lambda
\right\vert ^{2}\leq \sum_{i,j}\,f_{\xi _{i}\xi _{j}}\left( x,\xi \right)
\lambda _{i}\lambda _{j}\leq g_{2}\left( \left\vert \xi \right\vert \right)
\,\left\vert \lambda \right\vert ^{2},  \label{ellipticity}
\end{equation}

\begin{equation}
\sum_{i}\,\left\vert f_{\xi _{i}x_{k}}\left( x,\xi \right) \right\vert \leq
g_{3}\left( \left\vert \xi \right\vert \right) ,  \label{growth conditions A}
\end{equation}%
for all $\xi ,\lambda \in \mathbb{R}^{n}$, $k=1,2,\ldots ,n$, a.e.\ $x\in
\Omega ^{\prime }$. These functions $g_{1},g_{2},g_{3}$ are related to each
other by 
\begin{equation}
\left( g_{2}\left( t\right) \right) ^{2\gamma -1}\,t^{2}\leq M\left\{
1+\int_{0}^{t}\sqrt{g_{1}\left( s\right) }ds\right\} ^{\alpha },  \label{11M}
\end{equation}%
\begin{equation}
\left( g_{2}\left( \left\vert \xi \right\vert \right) \right) ^{2\gamma
-1}\,\left\vert \xi \right\vert ^{2\gamma }\leq M\left\{ 1+f\left( x,\xi
\right) \right\} ^{\beta },  \label{12M}
\end{equation}

\begin{equation}
g_{3}\left( t\right) \underset{}{\leq }M\left( 1+t^{\gamma }\right) \left(
g_{1}\left( t\right) \right) ^{\frac{1}{2}}\left( g_{2}\left( t\right)
\right) ^{\gamma -\frac{1}{2}},  \label{Assumption 3}
\end{equation}%
for a positive constant $M=M\left( \Omega ^{\prime }\right) $ and for all $%
\xi \in \mathbb{R}^{n}$ and $t\geq 0$. Finally, the exponents $\alpha ,\beta
,\gamma $ satisfy the bounds 
\begin{equation}
2\leq \alpha <2^{\ast }-2\left( \gamma -1\right) \,,
\label{condition on alpha}
\end{equation}%
\begin{equation}
1\leq \beta <\frac{2\left( \alpha +2\gamma -2\right) }{n\left( \alpha
+2\gamma -4\right) }\,,  \label{condition on beta}
\end{equation}%
for some $\gamma \geq 1$. Here, as usual, we denote by $2^{\ast }$ the
Sobolev exponent defined by $2^{\ast }=\frac{2n}{n-2}$ if $n>2$, while $%
2^{\ast }$ is a fixed real number large enough when $n=2$; precisely such
that $\alpha <2^{\ast }$ and $\gamma <\frac{2^{\ast }-\alpha +2}{2}$.

\begin{remark}
\label{alphabetagamma} The conditions (\ref{condition on alpha}),(\ref%
{condition on beta}) on the exponents $\alpha $,$\beta $ and $\gamma \geq 1$
are satisfied if we choose, for instance, 
\begin{equation}
\beta =\frac{\alpha }{2}+\delta \,,\;\;\;\text{and}\;\;\;\gamma =1+\delta \,,
\label{alphabetagamma values}
\end{equation}%
with $2\leq \alpha <2^{\ast }-2\left( \gamma -1\right) =2^{\ast }-2\delta $, 
$1\leq \beta <1+\frac{2}{n}$ and $0\leq \delta <\frac{4}{n(n-2)}$. Indeed,
with the choice $\gamma =1+\delta $, (\ref{condition on beta}) assumes the
form 
\begin{equation*}
1\leq \tfrac{\alpha }{2}+\delta =\tfrac{\alpha +2\delta }{2}<\tfrac{2(\alpha
+2\delta )}{n(\alpha -2+2\delta )},
\end{equation*}%
which gives $\alpha -2+2\delta <\frac{4}{n}$, that is $\beta <1+\frac{2}{n}$%
. Finally, we must satisfy the $\alpha -$bound $\alpha <2^{\ast }-2\delta $,
i.e. $0\leq \delta <\frac{2^{\ast }-\alpha }{2}$. Since $\alpha =2(\beta
-\delta )\leq 2\beta <2+\frac{4}{n}$, we obtain the sufficient bound for $%
\delta $ 
\begin{equation*}
0\leq \delta <\tfrac{2^{\ast }-2-\frac{4}{n}}{2}\;\underset{\text{if }n>2}{=}%
\;\;\tfrac{4}{n(n-2)}\,.
\end{equation*}

In the particular case $\gamma =1$ conditions (\ref{condition on alpha}),(%
\ref{condition on beta}) simplify into $2\leq \alpha <2^{\ast }$ and $1\leq
\beta <\frac{2\alpha }{n\left( \alpha -2\right) }$. For instance, for the so
called natural growth conditions with, up to multiplicative constants, $%
g_{1}\left( t\right) =t^{p-2}$, $g_{2}\left( t\right) =\left( 1+t\right)
^{p-2}$ for some $p\geq 2$, (\ref{11M}) becomes 
\begin{equation}
\left( 1+t\right) ^{p-2}\,t^{2}\leq M\left\{ 1+\int_{0}^{t}s^{\frac{p}{2}%
-1}ds\right\} ^{\alpha }=M_{1}\left( 1+t^{\frac{\alpha }{2}p}\right) \,
\label{p,p-growth}
\end{equation}%
and it is satisfied for a constant $M_{1}$ and for all $t\geq 0$ by choosing 
$\alpha =2$. Similarly (\ref{12M}) holds for $g_{2}\left( t\right) =\left(
1+t\right) ^{p-2}$ and $f\left( x,\xi \right) \geq \mathrm{{const}%
\,\left\vert \xi \right\vert ^{p}}$ if $\beta =1$. This in the natural
growth conditions the simplest choice is $\alpha =2$, $\beta =1$ and $\gamma
=1$. Similar computations when $\gamma =1$ can be done for the $p,q-$growth
case, with $g_{1}\left( t\right) =t^{p-2}$, $g_{2}\left( t\right) =\left(
1+t\right) ^{q-2}$ and $f\left( \xi \right) \geq \mathrm{{const}\,\left\vert
\xi \right\vert ^{p}}$ for some exponents $q\geq p\geq 2$. To test (\ref{11M}%
) we change $p$ with $q$ in the left hand side of (\ref{p,p-growth}) and we
obtain $q\leq \frac{\alpha }{2}p$ . To test (\ref{12M}), we consider the
sufficient condition $\left( 1+\left\vert \xi \right\vert \right)
^{q-2}\,\left\vert \xi \right\vert ^{2}\leq M\left\{ 1+\left\vert \xi
\right\vert ^{p}\right\} ^{\beta }$, which holds with $q\leq \beta p$. In
this case the natural choice, as in (\ref{alphabetagamma values}), is to fix 
$\beta =\frac{\alpha }{2}$ when $\gamma =1$ (i.e. $\delta =0$), under the
constraint $\alpha -2<\frac{4}{n}$ or equivalently $\beta <1+\frac{2}{n}$,
which - written in the explicit form $\frac{q}{p}<1+\frac{2}{n}$ - is a
natural bound for $p,q-$growth variational problems without $x-$dependence
(see for instance \cite[Remark 2.1]{Marcellini JOTA 1996} or \cite%
{Colombo-Mingione 2015a},\cite{Cupini-Marcellini-Mascolo 2023},\cite%
{Eleuteri-Marcellini-Mascolo 2020}). Note however that in general the $x-$%
dependence and specifically the exponential growth need to choose the
parameter $\gamma $ strictly grater than $1$, for instance as in (\ref%
{alphabetagamma values}). See details in Section \ref{Section: Consequences
and examples}.
\end{remark}

We are ready to state in Theorem \ref{Lipschitz continuity result} below our
regularity result, whose proof, divided into several steps, is given in
Section \ref{Section of the proofs}. With \textit{regularity} of $u$ we mean 
\textit{interior regularity}, without fixing Dirichlet boundary conditions,
or without other types of boundary conditions. Precisely, we obtain local
Lipschitz continuity estimates for $u$ and also local estimates of the $%
L^{2}-$norm of the $n\times n$ matrix $D^{2}u$ of the second derivatives of
a local minimizer $u$ of the energy integral (\ref{energy integral}). A
local minimizer is a Sobolev function $u$ such that 
\begin{equation*}
\int_{\Omega ^{\prime }}f\left( x,Du\right) \,dx<+\infty \;\;\;\text{and}%
\;\;\;\int_{\Omega ^{\prime }}f\left( x,Du\right) \,dx\leq \int_{\Omega
^{\prime }}f\left( x,D\left( u+\psi \right) \right) \,dx\,,
\end{equation*}%
for every $\psi \in W^{1,2}\left( \Omega \right) $ with support in the open
set $\Omega ^{\prime }$ whose closure is contained in $\Omega $. Theorem \ref%
{Lipschitz continuity result} is a regularity result for \textit{smooth
local minimizers} $u$ of the energy integral (\ref{energy integral}). These
estimates are in fact \textit{a priori estimates}: they hold for a priori 
\textit{smooth} local minimizers; precisely, for local minimizers $u$ in the
class 
\begin{equation}
\left\{ u\in W^{2,2}\left( \Omega ^{\prime }\right) :\int_{\Omega ^{\prime
}}\left\vert Du\right\vert ^{2\gamma }\left( g_{2}(\left\vert Du\right\vert
)\right) ^{2\gamma -1}dx+\int_{\Omega ^{\prime }}g_{2}\left( \left\vert
Du\right\vert \right) \,\left\vert D^{2}u\right\vert ^{2}dx<+\infty \right\}
,  \label{a priori smooth class for local minimizers}
\end{equation}%
where these summability conditions are valid for all sets $\Omega ^{\prime }$
compactly contained in $\Omega $. As well known, the natural condition is to
impose \textit{finite energy }for the local minimizer $u$; i.e. to belong to
the class 
\begin{equation}
\left\{ u\in W^{1,1}\left( \Omega ^{\prime }\right) :\;\;\int_{\Omega
^{\prime }}f\left( x,Du\right) \,dx<+\infty \right\} \,,
\label{class for local minimizers of finite energy}
\end{equation}%
while (\ref{a priori smooth class for local minimizers}) is an a priori
proper condition useful to prove the estimates (\ref{gradient bound})-(\ref%
{bound on the second derivatives}) of Theorem \ref{Lipschitz continuity
result}.

The auxiliary assumption (\ref{a priori smooth class for local minimizers})
should be removed later. Usually, the main steps for regularity are the 
\textit{a-priori estimates}; the following steps are obtained through an 
\textit{approximation procedure}. A reference example of this method can be
found in the first Lipschitz continuity result under \textit{non-uniformly
elliptic} and \textit{general growth condition}, obtained in \cite[Step 5]%
{Marcellini JOTA 1996} for local minimizers, however for the simpler case $%
f\left( x,\xi \right) =f\left( \xi \right) $, i.e. with $f$ independent of $%
x $. A further approximation procedure to go from the a priori estimates to
full regularity was done in \cite{Eleuteri-Marcellini-Mascolo 2020}: also
this one is related to a special case; precisely with \textit{%
modulus-dependence} on the $\xi -$variable $f\left( x,\xi \right) =g\left(
x,\left\vert \xi \right\vert \right) $, named \textit{Uhlenbeck structure},
recalling the celebrated paper \cite{Uhlenbeck 1977} published in 1977 by
Karen Uhlenbeck.

Finally we observe that we do not assume \textit{structure conditions} on
the function $f:\Omega \times \mathbb{R}^{n}\rightarrow \mathbb{R}$. In
particular we do not assume the mentioned \textit{Uhlenbeck structure} with
the modulus dependence $f\left( x,\xi \right) =g\left( x,\left\vert \xi
\right\vert \right) $ on the gradient variable, not even the so-called 
\textit{double phase} structure, with $f=f\left( x,\xi \right) :=\left\vert
\xi \right\vert ^{p}+a\left( x\right) \left\vert \xi \right\vert ^{q}$,
where $a\left( x\right) $ is a nonnegative coefficient which is equal to
zero somewhere in $\Omega $, neither the more general $p,q-$\textit{growth
conditions}, which are more general than the double phase case, but which of
course do not include \textit{exponential growth}, a case on the contrary
that enters in our Lipschitz continuous a-priori estimates of Theorem \ref%
{Lipschitz continuity result}.

\begin{theorem}
\label{Lipschitz continuity result}Under the ellipticity and growth
conditions (\ref{ellipticity})-(\ref{condition on beta}), let $u$ be a local
minimizer of the energy integral (\ref{energy integral}) in the Sobolev
class (\ref{a priori smooth class for local minimizers}). Then some uniform
a-priori estimates hold for the $L_{\mathrm{loc}}^{\infty }-$norm of the
gradient $Du$ and for the $n\times n$ matrix $D^{2}u$ of the second
derivatives of $u$. Precisely, for every open set $\Omega ^{\prime }$
compactly contained in $\Omega $ there exist exponents $\theta _{1},\theta
_{3}>1$, $\theta _{2},\theta _{4}>0$ and a radius $R_{0}>0$ such that\textit{%
\ }%
\begin{equation}
\left\Vert Du\right\Vert _{L^{\infty }\left( B_{\rho };\mathbb{R}^{n}\right)
}^{2}\leq \tfrac{c}{\left( R-\rho \right) ^{\theta _{2}}}\left(
\int_{B_{R}}\left( 1+f\left( Du\right) \right) \,dx\right) ^{\theta _{1}}\,,
\label{gradient bound}
\end{equation}%
\begin{equation}
\int_{B_{\rho }}g_{1}\left( \left\vert Du\right\vert \right) \left\vert
D^{2}u\right\vert ^{2}dx\leq \tfrac{c}{\left( R-\rho \right) ^{\theta _{4}}}%
\left( \int_{B_{R}}\left( 1+f\left( Du\right) \right) \,dx\right) ^{\theta
_{3}}\,,  \label{bound on the second derivatives}
\end{equation}%
\textit{for every }$\rho $\textit{,} $R$\textit{\ with }$0<\rho <R\leq R_{0}$%
\textit{\ }and for a positive constant $c$, \textit{where }$B_{\rho }$, $%
B_{R}$\textit{\ are concentric balls contained in }$\Omega ^{\prime }$ with
respective radii $\rho $, $R$.
\end{theorem}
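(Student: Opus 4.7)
The plan is to follow the classical differentiate-and-test strategy for $p,q$-growth and exponential growth problems, adapted to the present $(g_1,g_2,g_3)$-setting. Since $u$ lies in the class (\ref{a priori smooth class for local minimizers}), the Euler--Lagrange equation may be differentiated in each direction $x_s$, producing the weak equation
$$\int_{\Omega } \sum_{i,j} f_{\xi _{i}\xi _{j}}(x, Du)\, (u_{x_s})_{x_j}\, \varphi _{x_i}\, dx \;=\; -\int_{\Omega } \sum_{i} f_{\xi _{i} x_{s}}(x, Du)\, \varphi _{x_{i}}\, dx$$
for every admissible $\varphi$. First I would test with $\varphi = \eta ^{2}\, u_{x_{s}}\, \Phi(|Du|)$, where $\eta $ is a standard cutoff between concentric balls $B_{\rho } \subset B_{R}$ and $\Phi$ is a nonnegative increasing weight to be selected. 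Expanding $D\varphi $, summing over $s=1,\dots ,n$, and using the ellipticity lower bound in (\ref{ellipticity}) on the principal $D^{2}u$-terms, together with (\ref{growth conditions A})--(\ref{Assumption 3}) on the mixed $f_{\xi x}$-term and Young's inequality on the cross contributions produced by $D\eta $ and $\Phi'(|Du|)\, D|Du|$, I expect to arrive at a Caccioppoli-type inequality of the schematic form
$$\int_{B_{\rho }} \eta^{2}\, g_{1}(|Du|)\, |D^{2}u|^{2}\, dx \;\leq \; \frac{C}{(R-\rho )^{2}} \int_{B_{R}} g_{2}(|Du|)^{2\gamma -1}\, |Du|^{2\gamma }\, dx \;+\; (\text{lower order terms}).$$
The correct choice of $\Phi$ — essentially $\Phi(t) \sim g_{2}(t)^{2(\gamma -1)}\, t^{2(\gamma -1)}$ — is precisely what makes the exponent $2\gamma -1$ appear symmetrically on both sides and is dictated by the right-hand side of (\ref{Assumption 3}).

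The next step is to promote this Caccioppoli estimate to an $L^{\infty }$ bound by Sobolev embedding. Introduce the auxiliary function $G(t) := \int_{0}^{t} \sqrt{g_{1}(s)}\, ds$, for which $|D\, G(|Du|)|^{2} \leq g_{1}(|Du|)\, |D^{2}u|^{2}$ pointwise; Sobolev's inequality then converts the left-hand side of the Caccioppoli into an $L^{2^{\ast }}$-norm of $\eta \, G(|Du|)$. Assumption (\ref{11M}) reinterprets $g_{2}(|Du|)^{2\gamma -1}|Du|^{2}$ as a power $\alpha $ of $G(|Du|)$, while (\ref{12M}) converts the quantity $g_{2}^{2\gamma -1}|Du|^{2\gamma }$ into a power $\beta $ of $1 + f(x, Du)$. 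Combining the two produces a self-improving inequality whose right-hand side grows with a power of $\int f(\cdot ,Du)\, dx$ that, thanks to the sharp bounds (\ref{condition on alpha})--(\ref{condition on beta}), remains strictly subcritical with respect to the Sobolev gain factor $2^{\ast }/2$. A Moser-type iteration on a decreasing sequence of concentric balls then delivers (\ref{gradient bound}); once $|Du| \in L^{\infty }_{\mathrm{loc}}$ is known, the Caccioppoli inequality already derived supplies (\ref{bound on the second derivatives}).

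The main obstacle I anticipate is the bookkeeping of exponents in the iteration. At each step the Sobolev embedding multiplies the current exponent by $2^{\ast }/2$, but (\ref{11M})--(\ref{12M}) inflate it by factors involving $\alpha $ and $\beta $, and the constraint $\beta < \frac{2(\alpha + 2\gamma -2)}{n(\alpha + 2\gamma -4)}$ is exactly what forces the iterated exponents and associated constants to remain summable, so that the Moser scheme converges to a finite $L^{\infty }$-bound. A secondary technical difficulty lies in the genuine $x$-dependence handled through $g_{3}$: without an Uhlenbeck structure $f = g(x, |\xi |)$, the test function does not commute cleanly with $D_{x}f_{\xi }$, so one must use (\ref{Assumption 3}) together with a weighted Cauchy--Schwarz inequality to split the $g_{3}$-factor into a piece $g_{1}^{1/2}$ (absorbed on the left together with the $|D^{2}u|$-term) and a piece $(1 + t^{\gamma })\, g_{2}^{\gamma - 1/2}$ (retained on the right and subsequently matched against (\ref{11M})). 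This asymmetric splitting, rather than the iteration itself, is the feature that distinguishes the present a-priori estimate from the Uhlenbeck-type results quoted in the introduction, and is what allows exponential growth in $|\xi |$ to coexist with genuine $x$-dependence in $f$.
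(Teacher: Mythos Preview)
Your overall architecture matches the paper's: differentiate the Euler--Lagrange equation, test with $\varphi = \eta^{2}u_{x_{s}}\Phi(|Du|)$, derive a Caccioppoli inequality, feed it through Sobolev via an auxiliary $G$, Moser-iterate to $L^{\infty}$, and then read off the $W^{2,2}$-bound. Your diagnosis of the $g_{3}$-splitting --- $g_{1}^{1/2}$ absorbed on the left together with $|D^{2}u|$, $(1+t^{\gamma})g_{2}^{\gamma-1/2}$ kept on the right --- is exactly how the paper handles the $f_{\xi_i x_k}$-terms.

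However, your proposed choice of $\Phi$ is wrong, and this is not a cosmetic point. The paper does \emph{not} take $\Phi(t)\sim g_{2}(t)^{2(\gamma-1)}t^{2(\gamma-1)}$; it takes pure powers $\Phi(t)=t^{2\lambda}$ with $\lambda$ as the Moser iteration parameter. The factor $g_{2}^{2\gamma-1}$ that appears ``symmetrically on both sides'' does \emph{not} come from $\Phi$: on the right it is produced by the $g_{3}$-estimate (\ref{Assumption 3}) --- your own splitting already yields $(1+t^{\gamma})^{2}g_{2}^{2\gamma-1}$ there --- while on the left it is recovered only \emph{after} Sobolev, via assumption (\ref{11M}), which bounds $g_{2}^{2\gamma-1}t^{2}$ by $(1+\int_{0}^{t}\sqrt{g_{1}})^{\alpha}$. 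Correspondingly the auxiliary function must be $G(t)=1+\int_{0}^{t}\sqrt{\Phi(s)g_{1}(s)}\,ds$ (with $\Phi$ \emph{inside} the integral), so that the power $t^{\lambda}$ emerges alongside $\int\sqrt{g_{1}}$ and the resulting recursion in the $|Du|$-exponent reads $2(\lambda_{k}+\gamma)\mapsto 2^{\ast}(\lambda_{k}+1)-\alpha+2=:2(\lambda_{k+1}+\gamma)$. A fixed $\Phi$ depending on $g_{2}$ carries no iteration parameter (so there is nothing to Moser-iterate) and would moreover require control of $g_{2}'$, which is nowhere assumed.

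There is also a second iteration that you have compressed into one line. The Moser scheme above only gives $\|Du\|_{L^{\infty}}$ controlled by $\int_{B_{R}}V\,dx$ with $V=1+|Du|^{2\gamma}g_{2}(|Du|)^{2\gamma-1}$, not directly by $\int(1+f(Du))\,dx$. To reach the latter the paper runs a \emph{separate} interpolation-iteration (its Step~7): from the $\Phi\equiv 1$ Caccioppoli plus Sobolev it obtains $\bigl(\int \eta^{2^{\ast}}V^{\nu}\bigr)^{2/2^{\ast}}\le c\int(\eta^{2}+|D\eta|^{2})V$ for some $\nu\in[1,\tfrac{2^{\ast}}{\alpha-2+2\gamma})$, then writes $V=V^{\nu/\mu}V^{1-\nu/\mu}$ with a free parameter $\mu>2^{\ast}/2$, applies H\"older, and iterates so that the residual factor $V^{(\mu-\nu)/(\mu-1)}$ lands exactly on $(1+f(Du))$ via (\ref{12M}) with $\beta=(\mu-1)/(\mu-\nu)$. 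Condition (\ref{condition on beta}) is precisely what guarantees an admissible choice of $\mu$. So you should expect two nested iterations rather than one, and take $\Phi=t^{2\lambda}$ rather than a $g_{2}$-dependent weight.
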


The constant $c$ in (\ref{gradient bound}),(\ref{bound on the second
derivatives}) depends on the dimension $n$ and on the data, but it is
independent of $u$ itself. An explicit analytic representation of the
exponent $\theta _{1}$ is given below in (\ref{52M}), as well as the
analytic expression of $\theta _{2}$, $\theta _{3}$, $\theta _{4}$ in the
proof of Theorem \ref{Lipschitz continuity result}. The a priori estimate (%
\ref{bound on the second derivatives}) is a $W_{\mathrm{loc}}^{2,2}\left(
\Omega \right) -$bound of the local minimizer $u$ of the energy integral (%
\ref{energy integral}) only on subsets $\Omega ^{\prime }\subset \subset
\Omega $ of the type $\left\{ x\in \Omega ^{\prime }:\;g_{1}\left(
\left\vert Du\left( x\right) \right\vert \right) \geq m\right\} $ for fixed
constants $m>0$. For instance, when $g_{1}:\left[ 0,+\infty \right)
\rightarrow \left( 0,+\infty \right) $ is a positive function also at $t=0$,
i.e. when $m=g_{1}\left( 0\right) >0$ as it happens in the nondegenerate $p-$%
Laplacian and $p\left( x\right) -$Laplacian cases. In fact in these cases by
(\ref{bound on the second derivatives})$_{2}$ we get%
\begin{equation}
\text{the nondegenerate case:\ \ }\int_{B_{\rho }}\left\vert
D^{2}u\right\vert ^{2}\,dx\leq \tfrac{c}{m\left( R-\rho \right) ^{\theta
_{4}}}\left( \int_{B_{R}}\left( 1+f\left( Du\right) \right) \,dx\right)
^{\theta _{3}}.  \label{bound on the second derivatives (non degenerate)}
\end{equation}%
As an example related to the estimate (\ref{bound on the second derivatives}%
), we mention here the degenerate case recently studied by
Brasco-Carlier-Santambrogio \cite{Brasco-Carlier-Santambrogio 2010},
Santambrogio-Vespri \cite{Santambrogio-Vespri 2010}, Colombo-Figalli \cite%
{Colombo-Figalli 2014}, B\"{o}gelein-Duzaar-Giova-Passarelli-Scheven \cite%
{Boegelein-Duzaar-Giova-Passarelli-Scheven 2023}. It is a \textit{very
degenerate case}, with $f\left( x,t\right) =\frac{1}{p}\left( t-1\right)
_{+}^{p}-h\left( x\right) u$ (as before $t=\left\vert \xi \right\vert $) and
thus $g_{1}\left( t\right) =g_{2}\left( t\right) =0$ for all $t\in \left[ 0,1%
\right] $ and $g_{1}\left( t\right) >0$ when $t>1$. In this case, when $h=0,$
by Theorem \ref{Lipschitz continuity result} we deduce the $L^{\infty }$%
-local gradient bound (\ref{gradient bound}), while the $W_{\mathrm{loc}%
}^{2,2}\left( \Omega \right) -$bound (\ref{bound on the second derivatives})
gives contribution only at the subset of $\Omega $ where $\left\vert
Du\left( x\right) \right\vert >1$, being indefinite the gradient $Du\left(
x\right) $ and the matrix of its second derivatives $D^{2}u\left( x\right) $
when $\left\vert Du\left( x\right) \right\vert \leq 1$, since there any
Sobolev function $u$, with $\left\vert Du\left( x\right) \right\vert \leq 1$%
, a-priori could be a minimizer.

\section{Consequences and examples\label{Section: Consequences and examples}}

We collect some examples which satisfy the hypotheses (\ref{ellipticity})-(%
\ref{condition on beta}) and we state a specific version of Theorem \ref%
{Lipschitz continuity result} for each energy integral discussed below. For
the readers' convenience we start by reporting two well-known properties
which will be used to show that the models proposed below satisfy the
assumptions stated in the previous section. In Proposition \ref{quadratic
form} we represent the quadratic form, associated to the $n\times n$ matrix $%
(f_{\xi _{i}\xi _{j}})$ of the second derivatives of $f\left( x,\xi \right) $
with respect to $\xi $, in the particular case when $f$ depends on $\xi $
only thought its modulus $\left\vert \xi \right\vert $. We recall that $%
g_{1},g_{2},g_{3}:\left[ 0,+\infty \right) \rightarrow \left[ 0,+\infty
\right) $ are nonnegative increasing functions. They are not identically
equal to zero; the reason is to avoid the trivial case when the integrand $%
f=f\left( x,\xi \right) $ satisfies all conditions (\ref{ellipticity})-(\ref%
{condition on beta}) with $g_{1},g_{2},g_{3}$ identically equal to zero; in
this trivial case any $u\in W_{\mathrm{loc}}^{1,p}\left( \Omega \right) $
comes out to be a local minimizer of the "zero integral" and of course
regularity results do not hold. Thus there exists $t_{0}>0$ such that $%
g_{2}(t_{0})\geq g_{1}(t_{0})>0$ and, up to a rescaling, we can consider $%
t_{0}\leq 1$; therefore $g_{2}(1)\geq g_{1}(1)\geq c>0$. If necessary we
multiply $f$, and thus $g_{1},g_{2},g_{3}$ too, by the positive constant $%
1/c $ to have 
\begin{equation}
g_{2}(1)\geq g_{1}(1)\geq 1\,.  \label{g(1)}
\end{equation}%
We introduce a function $g:\Omega \times \left[ 0,+\infty \right)
\rightarrow \mathbb{R}$, $g=g\left( x,t\right) $, $t\in \left[ 0,+\infty
\right) $, $t=\left\vert \xi \right\vert $, $g$ twice differentiable with
respect to $t>0$, such that 
\begin{equation}
f\left( x,\xi \right) =g\left( x,\left\vert \xi \right\vert \right)
\,,\;\;\;\;\;\forall \;\left( x,\xi \right) \in \Omega \times \mathbb{R}%
^{n}\,.  \label{modulo dependence}
\end{equation}%
Although we explicitly remember that our Theorem \ref{Lipschitz continuity
result} deals with a general energy integrand $f\left( x,\xi \right) $ not
necessarily of the form (\ref{modulo dependence}).

\begin{proposition}
\label{quadratic form}Let $f\left( x,\xi \right) $ be represented in the
form $g\left( x,\left\vert \xi \right\vert \right) $, as in (\ref{modulo
dependence}), with partial derivative $g_{t}\left( x,t\right) $ locally
Lipschitz continuous in $t\in \left[ 0,+\infty \right) $ for fixed $x\in
\Omega $, with $g_{t}\left( x,0\right) =0$. Then the quadratic form
associated to the $n\times n$ matrix $(f_{\xi _{i}\xi _{j}})$ satisfies the
bounds 
\begin{equation}
g_{1}\left( x,\left\vert \xi \right\vert \right) \,\left\vert \lambda
\right\vert ^{2}\leq \sum_{i,j}\,f_{\xi _{i}\xi _{j}}\left( x,\xi \right)
\lambda _{i}\lambda _{j}\leq g_{2}\left( x,\left\vert \xi \right\vert
\right) \,\left\vert \lambda \right\vert ^{2},  \label{a0}
\end{equation}%
for all $\left( x,\xi \right) \in \Omega \times \mathbb{R}^{n}$, with $g_{1}$
and $g_{2}$ given by

(i) in general $g_{1}=\min \left\{ \frac{g_{t}}{t},g_{tt}\right\} $ and $%
g_{2}=\max \left\{ \frac{g_{t}}{t},g_{tt}\right\} $;

(ii) $g_{1}=\frac{g_{t}}{t}$ and $g_{2}=g_{tt}$, if $\frac{g_{t}\left(
x,t\right) }{t}$ is increasing with respect to $t$ at $x\in \Omega $;

(iii) $g_{1}=g_{tt}$ and $g_{2}=\frac{g_{t}}{t}$, if $\frac{g_{t}\left(
x,t\right) }{t}$ is decreasing with respect to $t$ at $x\in \Omega $.
\end{proposition}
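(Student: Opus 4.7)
The plan is to compute the Hessian of $f$ with respect to $\xi$ directly from the chain rule, express the associated quadratic form as an affine combination of the two candidate eigenvalues $g_t/t$ and $g_{tt}$, and then read off (i), (ii), (iii) from the sign of a single coefficient. The boundary case $\xi=0$ is handled separately via the normalization $g_t(x,0)=0$.

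First, for $\xi\neq 0$ and $t=|\xi|$, differentiating $f(x,\xi)=g(x,|\xi|)$ gives
\begin{equation*}
f_{\xi_i}(x,\xi)=g_t(x,t)\,\frac{\xi_i}{t},
\end{equation*}
and a second differentiation yields
\begin{equation*}
f_{\xi_i\xi_j}(x,\xi)=g_{tt}(x,t)\,\frac{\xi_i\xi_j}{t^2}+\frac{g_t(x,t)}{t}\left(\delta_{ij}-\frac{\xi_i\xi_j}{t^2}\right).
\end{equation*}
Contracting with $\lambda\in\mathbb{R}^n$ and setting $s=(\xi\cdot\lambda)^2/t^2\in[0,|\lambda|^2]$, I obtain
\begin{equation*}
\sum_{i,j}f_{\xi_i\xi_j}(x,\xi)\lambda_i\lambda_j=\frac{g_t(x,t)}{t}\,|\lambda|^2+\left(g_{tt}(x,t)-\frac{g_t(x,t)}{t}\right)s.
\end{equation*}
This is affine in $s$ on the interval $[0,|\lambda|^2]$, so its extrema are attained at the endpoints and equal $(g_t/t)\,|\lambda|^2$ (at $s=0$, achieved by any $\lambda\perp\xi$) and $g_{tt}\,|\lambda|^2$ (at $s=|\lambda|^2$, achieved by $\lambda\parallel\xi$). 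Statement (i) follows at once.

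For (ii) and (iii) I would use the elementary identity $(g_t/t)_t=(g_{tt}\,t-g_t)/t^2$, which shows that the sign of $g_{tt}-g_t/t$ coincides with the sign of the derivative of $t\mapsto g_t(x,t)/t$. Hence $t\mapsto g_t/t$ is increasing precisely when $g_{tt}\geq g_t/t$, which identifies $g_2=g_{tt}$ and $g_1=g_t/t$ and yields (ii); the reverse monotonicity gives (iii). Finally, at $\xi=0$, the hypothesis $g_t(x,0)=0$ together with the local Lipschitz continuity of $g_t(x,\cdot)$ ensures that $g_t(x,t)/t$ is bounded as $t\to 0^+$ and that the Hessian extends continuously; the bounds (\ref{a0}) then pass to the limit and hold also at $\xi=0$. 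The only delicate point is this behavior at the origin, where one must verify that the removable singularity of the decomposition $\delta_{ij}-\xi_i\xi_j/t^2$ does not obstruct the limit and that $g_t/t$ and $g_{tt}$ remain comparable in the sense required; this is the step where the normalization $g_t(x,0)=0$ is used in an essential way.
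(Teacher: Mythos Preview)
Your proof is correct and follows essentially the same route as the paper: compute the Hessian via the chain rule, write the quadratic form as $\tfrac{g_t}{t}|\lambda|^2+(g_{tt}-\tfrac{g_t}{t})\,(\xi\cdot\lambda)^2/t^2$, and bound it using $0\le(\xi\cdot\lambda)^2\le|\xi|^2|\lambda|^2$ (the paper phrases this last step as Cauchy--Schwarz, you phrase it as extremizing an affine function in $s$, but the content is identical). Your explicit treatment of the limit $\xi\to 0$ is a small addition the paper omits.
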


Usually, it is not difficult to compute explicitly $g_{1}\left( \left\vert
\xi \right\vert \right) $ and $g_{2}\left( \left\vert \xi \right\vert
\right) $ and to check the monotoniticy of $g_{t}\left( x,t\right) /t$. For
instance for an integrand related to the $p-$Laplacian, of the type %
\eqref{modulo dependence}, with $f\left( x,\xi \right) =g\left( x,\left\vert
\xi \right\vert \right) =a\left( x\right) \left\vert \xi \right\vert ^{p}$
and $g\left( x,t\right) =a\left( x\right) t^{p}$, we have $g_{t}\left(
x,t\right) /t=pa\left( x\right) t^{p-2}$; therefore we are in the case 
\textit{(ii)} if $p\geq 2$, while \textit{(iii)} holds when $p\in \left( 1,2%
\right] $. In this case, if $c_{1}\leq a\left( x\right) \leq c_{2}$ for all $%
x\in \Omega $ (by $c_{i}$ we denote positive constants), then $%
c_{3}g_{t}/t\leq g_{tt}\leq c_{4}g_{t}/t$ for all $\left( x,t\right) \in
\Omega \times \left( 0,+\infty \right) $; or equivalently $c_{3}\leq
tg_{tt}/g_{t}\leq c_{4}$ for all $\left( x,t\right) \in \Omega \times \left(
0,+\infty \right) $. Please, note the "abuse of notation", due to the fact
that in Proposition \ref{quadratic form} $g_{1}$ and $g_{2}$ are functions
depending on $\left( x,t\right) =\left( x,\left\vert \xi \right\vert \right) 
$, while in the ellipticity condition (\ref{ellipticity}) they are functions 
$g_{1}=g_{1}\left( \left\vert \xi \right\vert \right) $, $g_{2}=g_{2}\left(
\left\vert \xi \right\vert \right) $ independent of $x$. Of course to go
from $g_{1}$ in Proposition \ref{quadratic form} to the $g_{1}$ in the left
hand side of (\ref{ellipticity}) we will consider the infimum of $%
g_{1}\left( x,\left\vert \xi \right\vert \right) $ with respect to $x$;
similarly, the supremum with respect to $x$ for $g_{2}$. In general, if $%
g_{1}\leq g_{2}\leq c_{5}g_{1}$ for all $\left( x,\xi \right) \in \Omega
\times \mathbb{R}^{n}$, we say that (\ref{a0}) are \textit{uniformly
elliptic conditions}. Therefore the $p-$Laplacian is an uniformly elliptic
operator. However in our Theorem \ref{Lipschitz continuity result} we do not
assume uniform elliptic conditions.

\bigskip

\begin{proof}[Proof of Proposition \protect\ref{quadratic form}]
For $f\left( x,\xi \right) =g\left( x,\left\vert \xi \right\vert \right) $,
as in (\ref{modulo dependence}), a computation shows 
\begin{equation*}
f_{\xi _{i}}=g_{t}\tfrac{\xi _{i}}{\left\vert \xi \right\vert }%
\,,\;\;\;\;\;f_{\xi _{i}\xi _{j}}=g_{tt}\tfrac{\xi _{i}\xi _{j}}{\left\vert
\xi \right\vert ^{2}}+g_{t}\tfrac{\delta _{ij}\left\vert \xi \right\vert
-\xi _{i}\frac{\xi _{j}}{\left\vert \xi \right\vert }}{\left\vert \xi
\right\vert ^{2}}=\xi _{i}\xi _{j}\left( \tfrac{g_{tt}}{\left\vert \xi
\right\vert ^{2}}-\tfrac{g_{t}}{\left\vert \xi \right\vert ^{3}}\right)
+\delta _{ij}\tfrac{g_{t}}{\left\vert \xi \right\vert },
\end{equation*}%
for $i,j=1,2,\ldots ,n$, where, as well known, $\delta _{ij}$ is the \textit{%
Kronecker delta}: $\delta _{ij}=1$ if $i=j$ and $\delta _{ij}=0$ if $i\neq j$%
. Therefore%
\begin{equation*}
\sum_{i,j=1}^{n}\,f_{\xi _{i}\xi _{j}}\left( x,\xi \right) \lambda
_{i}\lambda _{j}=\tfrac{1}{\left\vert \xi \right\vert ^{2}}\left( g_{tt}-%
\tfrac{g_{t}}{\left\vert \xi \right\vert }\right) \sum_{i,j=1}^{n}\,\xi
_{i}\xi _{j}\lambda _{i}\lambda _{j}+\tfrac{g_{t}}{\left\vert \xi
\right\vert }\left( \lambda _{1}^{2}+\lambda _{2}^{2}+\ldots \lambda
_{n}^{2}\right) .
\end{equation*}%
For the first sum we have $\sum_{i,j=1}^{n}\,\xi _{i}\xi _{j}\lambda
_{i}\lambda _{j}=\left( \sum_{i=1}^{n}\,\xi _{i}\lambda _{i}\right)
^{2}=\left( \xi ,\lambda \right) ^{2}$, where $\left( \xi ,\lambda \right) $
is the scalar product of $\xi ,\lambda \in \mathbb{R}^{n}$. Then we get $%
\sum_{i,j=1}^{n}\,f_{\xi _{i}\xi _{j}}\left( x,\xi \right) \lambda
_{i}\lambda _{j}=\frac{1}{\left\vert \xi \right\vert ^{2}}\left( g_{tt}-%
\frac{g_{t}}{\left\vert \xi \right\vert }\right) \left( \xi ,\lambda \right)
^{2}+\frac{g_{t}}{\left\vert \xi \right\vert }\left\vert \lambda \right\vert
^{2}$. By the Cauchy-Schwarz inequality $\left\vert \left( \xi ,\lambda
\right) \right\vert \leq \left\vert \xi \right\vert \,\left\vert \lambda
\right\vert $, for all $\left( x,t\right) \in \Omega \times \left[ 0,+\infty
\right) $ such that $g_{tt}-\frac{g_{t}}{\left\vert \xi \right\vert }\geq 0$
we finally deduce 
\begin{equation*}
\tfrac{g_{t}}{\left\vert \xi \right\vert }\left\vert \lambda \right\vert
^{2}\leq \sum_{i,j=1}^{n}\,f_{\xi _{i}\xi _{j}}\left( x,\xi \right) \lambda
_{i}\lambda _{j}\leq \tfrac{1}{\left\vert \xi \right\vert ^{2}}\left( g_{tt}-%
\tfrac{g_{t}}{\left\vert \xi \right\vert }\right) \left\vert \xi \right\vert
^{2}\,\left\vert \lambda \right\vert ^{2}+\tfrac{g_{t}}{\left\vert \xi
\right\vert }\left\vert \lambda \right\vert ^{2}=g_{tt}\left\vert \lambda
\right\vert ^{2},
\end{equation*}%
for all $\lambda \in \mathbb{R}^{n}$; while, if at $\left( x,t\right) \in
\Omega \times \left[ 0,+\infty \right) $ we have $g_{tt}-\frac{g_{t}}{%
\left\vert \xi \right\vert }\leq 0$ then 
\begin{equation*}
g_{tt}\left\vert \lambda \right\vert ^{2}=\tfrac{1}{\left\vert \xi
\right\vert ^{2}}\left( g_{tt}-\tfrac{g_{t}}{\left\vert \xi \right\vert }%
\right) \left\vert \xi \right\vert ^{2}\,\left\vert \lambda \right\vert ^{2}+%
\tfrac{g_{t}}{\left\vert \xi \right\vert }\left\vert \lambda \right\vert
^{2}\leq \sum_{i,j=1}^{n}\,f_{\xi _{i}\xi _{j}}\left( x,\xi \right) \lambda
_{i}\lambda _{j}\leq \tfrac{g_{t}}{\left\vert \xi \right\vert }\left\vert
\lambda \right\vert ^{2}\,,
\end{equation*}%
for all $\lambda \in \mathbb{R}^{n}$, which is equivalent to the conclusion
in \textit{(i)}.

By computing the partial derivative $\frac{\partial }{\partial t}\frac{%
g_{t}\left( x,t\right) }{t}=\frac{g_{tt}\left( x,t\right) t-g_{t}\left(
x,t\right) }{t^{2}}$ we see that at every $x\in \Omega $ such that $%
g_{tt}\left( x,t\right) t-g_{t}\left( x,t\right) \geq 0$ for all $t\in
\left( 0,+\infty \right) $, then $t\rightarrow \frac{g_{t}\left( x,t\right) 
}{t}$ is increasing in $\left( 0,+\infty \right) $ and vice versa. This
proves \textit{(ii)}. Similarly for \textit{(iii)} when $g_{tt}\left(
x,t\right) t-g_{t}\left( x,t\right) \leq 0$ at $x\in \Omega $, for all $t\in
\left( 0,+\infty \right) $.
\end{proof}

\bigskip

The following property is elementary and does not need a proof.

\begin{proposition}
\label{auxiliary property} Let $t_0 \in \mathbb{R}$. If $h(t)$ is a
continuous function in $[t_0,+\infty )$ such that $\lim_{t\rightarrow
+\infty }h(t)\in \mathbb{R}$, then $h(t)$ is bounded in $[t_0,+\infty )$.
\end{proposition}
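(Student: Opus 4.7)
The statement is essentially a textbook exercise, and indeed the authors remark that it needs no proof; nevertheless the plan is to split the half-line $[t_0,+\infty)$ into a compact piece and a tail, bound $h$ separately on each, and combine the two bounds.

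First I would invoke the definition of the limit. Denoting $L:=\lim_{t\to+\infty}h(t)\in\mathbb{R}$, I take $\varepsilon=1$ in the definition to produce a threshold $T\geq t_0$ such that $|h(t)-L|<1$ for every $t\geq T$. By the triangle inequality this yields the tail bound $|h(t)|\leq |L|+1$ for all $t\in[T,+\infty)$, which already handles the only part of the domain where a problem could conceivably arise (the unbounded one).

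Next I would deal with the compact piece $[t_0,T]$. Since $h$ is continuous on this closed bounded interval, the Weierstrass extreme value theorem furnishes a constant $M_0:=\max_{t\in[t_0,T]}|h(t)|<+\infty$. Setting $M:=\max\{M_0,|L|+1\}$ and taking cases on whether $t\leq T$ or $t\geq T$ gives $|h(t)|\leq M$ for every $t\in[t_0,+\infty)$, which is the desired boundedness.

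The only potential obstacle is purely notational: one must be sure to choose $T\geq t_0$ (if the limit definition only gives some $T_0$, simply replace it by $\max\{T_0,t_0\}$) so that the two regions $[t_0,T]$ and $[T,+\infty)$ together cover the whole half-line. No growth or monotonicity of $h$ is used, and the argument is independent of the particular value of $t_0$, so this proposition is available throughout Section~\ref{Section: Consequences and examples} as a convenient tool for verifying that auxiliary quantities built from $g_1,g_2,g_3$ remain finite out to infinity.
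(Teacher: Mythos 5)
Your proof is correct and is exactly the standard argument the authors have in mind when they state that this proposition \textquotedblleft is elementary and does not need a proof\textquotedblright: the paper itself omits any proof. Your decomposition into a tail controlled by the limit (with $\varepsilon=1$) and a compact piece controlled by the Weierstrass extreme value theorem, together with the care to take $T\geq t_{0}$, is complete and needs no changes.
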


\subsection{Anisotropic energy integrals\label{Section: Anisotropic energy
integrals}}

We emphasize that in this example the condition in (\ref{modulo dependence}%
), i.e. $f\left( x,\xi \right) =g\left( x,\left\vert \xi \right\vert \right) 
$, is not assumed. Not even $f\left( x,\xi \right) $ should depend on $\xi
=\left( \xi _{1},\xi _{2},\ldots ,\xi _{n}\right) \in \mathbb{R}^{n}$ only
through powers of its components $\left\vert \xi _{1}\right\vert ^{p_{1}}$, $%
\left\vert \xi _{2}\right\vert ^{p_{2}}$, $\ldots $, $\left\vert \xi
_{n}\right\vert ^{p_{n}}$, for instance as frequently represented in the
recent literature on this subject, in the form $f\left( x,\xi \right)
=\sum\limits_{i=1}^{n}a_{i}\left( x\right) \left\vert u_{x_{i}}\right\vert
^{p_{i}}$ for some exponents $p_{i}$, $i=1,2,\ldots ,n$. .

We fix some indices in the set $\left\{ 1,2,\ldots ,n\right\} $ in order to
form a not empty proper subset; for instance we fix the index $\left\{
n\right\} $ and we consider the energy integral 
\begin{equation}
\int_{\Omega }\,(\,\sum\limits_{i,j=1}^{n}a_{ij}\left( x\right)
u_{x_{i}}u_{x_{j}}+\left\vert u_{x_{n}}\right\vert ^{q}\,)\,dx\,,
\label{Anisotropic integral with p=2}
\end{equation}%
with $q\geq 2$, where $\left( a_{ij}\left( x\right) \right) $ is an $n\times
n$ positive definite matrix of locally Lipschitz continuous functions on $%
\Omega $. Precisely, for every set $\Omega ^{\prime }$ compactly contained
in $\Omega $ there exist positive constants $c_{1}$, $c_{2}$ such that $%
c_{1}\left\vert \lambda \right\vert ^{2}\leq
\sum\limits_{i,j=1}^{n}a_{ij}\left( x\right) \lambda _{i}\lambda _{j}\leq
c_{2}\left\vert \lambda \right\vert ^{2}$ for all $\lambda \in \mathbb{R}^{n}
$ and $x\in \Omega ^{\prime }$. Or more generally the anisotropic energy
integrals has the form 
\begin{equation}
\int_{\Omega }\left\{ h\left( x,Du\right) +\left\vert u_{x_{n}}\right\vert
^{q}\right\} \,dx\,,  \label{Anisotropic integral}
\end{equation}%
where $h$ is a real function defined in $\Omega \times \mathbb{R}^{n}$ whose
gradient $D_{\xi }h\left( x,\xi \right) $ is locally Lipschitz continuous in 
$\Omega \times \mathbb{R}^{n}$ with respect to $\xi \in \mathbb{R}^{n}$ and $%
q\geq p\geq 2$. Moreover $h\left( x,Du\right) $ is locally uniformly
elliptic and satisfies the standard (classical) $p-$ellipticity and growth
conditions 
\begin{equation}
c_{1}\left\vert \xi \right\vert ^{p-2}\left\vert \lambda \right\vert
^{2}\leq \sum_{i,j}\,h_{\xi _{i}\xi _{j}}\left( x,\xi \right) \lambda
_{i}\lambda _{j}\leq c_{2}\left( 1+\left\vert \xi \right\vert ^{p-2}\right)
\left\vert \lambda \right\vert ^{2},  \label{standard ellipticity for h}
\end{equation}%
\begin{equation}
\sum_{i}\,\left\vert h_{\xi _{i}x_{k}}\left( x,\xi \right) \right\vert \leq
c_{3}\left\vert \xi \right\vert ^{p-1},  \label{standard growth for h}
\end{equation}%
for all $\xi ,\lambda \in \mathbb{R}^{n}$ and $k=1,2,\ldots ,n$, a.e.\ $x\in
\Omega ^{\prime }$. In the particular case (\ref{Anisotropic integral with
p=2}), with $h\left( x,\xi \right) =\sum\limits_{i,j}a_{ij}\left( x\right)
\xi _{i}\xi _{j}$ and $p=2$, then $\sum\limits_{i,j}\,h_{\xi _{i}\xi
_{j}}\left( x,\xi \right) \lambda _{i}\lambda
_{j}=2\sum\limits_{i,j}a_{ij}\left( x\right) \lambda _{i}\lambda _{j}$ and (%
\ref{standard ellipticity for h}) is satisfied with $p=2$ (and different
constants). It is not difficult to test the ellipticity and growth
conditions of the integrand $f\left( x,\xi \right) =h\left( x,\xi \right)
+\left\vert \xi _{n}\right\vert ^{q}$; since $f_{\xi _{i}}=h_{\xi _{i}}$ for
all $i\in \left\{ 1,2,\ldots ,n-1\right\} $, $f_{\xi _{n}}=h_{\xi
_{n}}+q\left\vert \xi _{n}\right\vert ^{q-2}\xi _{n}$ , and $f_{\xi
_{i}x_{k}}=h_{\xi _{i}x_{k}}$ for all $i\in \left\{ 1,2,\ldots
,n-1,n\right\} $, then 
\begin{equation}
c_{1}\left\vert \xi \right\vert ^{p-2}\left\vert \lambda \right\vert
^{2}\leq \sum_{i,j}\,f_{\xi _{i}\xi _{j}}\left( x,\xi \right) \lambda
_{i}\lambda _{j}\leq c_{4}\left( 1+\left\vert \xi \right\vert ^{q-2}\right)
\left\vert \lambda \right\vert ^{2},  \label{standard ellipticity for f}
\end{equation}%
\begin{equation}
\sum_{i}\,\left\vert f_{\xi _{i}x_{k}}\left( x,\xi \right) \right\vert \leq
c_{3}\left\vert \xi \right\vert ^{p-1}.  \label{standard growth for f}
\end{equation}%
In order to test conditions (\ref{ellipticity})-(\ref{condition on beta}),
in this case we fix (up to the multiplicative constants) $g_{1}\left(
t\right) =t^{p-2}$, $g_{2}\left( t\right) =1+t^{q-2}$, $g_{3}\left( t\right)
=t^{p-1}$, under the usual notation $\left\vert \xi \right\vert =t$. We
start from (\ref{Assumption 3}), which takes the form 
\begin{equation*}
h\left( t\right) =\tfrac{t^{p-1}}{\left( 1+t^{\gamma }\right) \left(
t^{p-2}\right) ^{\frac{1}{2}}\left( 1+t^{q-2}\right) ^{\gamma -\frac{1}{2}}}=%
\tfrac{t^{p/2}}{\left( 1+t^{\gamma }\right) \left( 1+t^{q-2}\right) ^{\gamma
-\frac{1}{2}}}\leq M\,,
\end{equation*}%
for all $t\in \left[ 0,+\infty \right) $ and for a constant $M>0$. As in
Proposition \ref{auxiliary property}, $h\left( t\right) $ is a bounded
function in $\left[ 0,+\infty \right) $ if it has a finite limit as $%
t\rightarrow +\infty .$ This finite limit condition $\lim_{t\rightarrow
+\infty }h(t)\in \mathbb{R}$ is reduced to the exponents' inequality $\tfrac{%
p}{2}\leq \gamma +\left( q-2\right) \left( \gamma -\tfrac{1}{2}\right) \,,$
which, for $\gamma =1$, simply means $p\leq q$. Therefore assumption (\ref%
{Assumption 3}) in this case holds with $\gamma =1$. Then (\ref{11M})
corresponds to 
\begin{equation}
(1+t^{q-2})^{2\gamma -1}\,t^{2}\leq M\,(1+\int_{0}^{t}s^{\frac{p}{2}%
-1}ds)^{\alpha }=M\,(1+\tfrac{2}{p}t^{\frac{p}{2}})^{\alpha },  \label{a1}
\end{equation}%
which is satisfied by a positive constant $M$ and for all $t\geq 0$, if we
choose $\gamma =1$ and $\alpha $ in such a way that $\left[ \left(
q-2\right) \left( 2\gamma -1\right) \right] _{\gamma =1}+2=q\leq \alpha \,%
\frac{p}{2}$; i.e., if we choose $\alpha =2\frac{q}{p}$. To respect the
conditions $2\leq \alpha <2^{\ast }=:\frac{2n}{n-2}$ (there is not an upper
bound for $\alpha $ when $n=2$) we need 
\begin{equation*}
2\leq 2\tfrac{q}{p}<\tfrac{2n}{n-2}\;\;\;\Leftrightarrow \;\;\;1\leq \tfrac{q%
}{p}<\tfrac{n}{n-2}\,.
\end{equation*}%
Let us now discuss assumption (\ref{12M}). For $f\left( x,\xi \right) $ as
in (\ref{Anisotropic integral}), being $\left\vert f\left( x,\xi \right)
\right\vert \leq c_{5}\left( 1+\left\vert \xi \right\vert ^{q}\right) $, it
corresponds to $\left( 1+\left\vert \xi \right\vert ^{q-2}\right) ^{2\gamma
-1}\,\left\vert \xi \right\vert ^{2\gamma }\leq M\left\{ 1+\left\vert \xi
\right\vert ^{q}\right\} ^{\beta }$, which, in terms of exponents, gives $%
\left[ \left( q-2\right) \left( 2\gamma -1\right) +2\right] _{\gamma
=1}=q\leq \beta q$. Thus $\frac{q}{p}\leq \beta $. To respect the conditions
for $\beta $ in (\ref{condition on beta}), i.e. $1\leq \beta <\left[ \frac{%
2(\alpha -2+2\gamma )}{n(\alpha -4+2\gamma )}\right] _{\gamma =1}=\frac{%
2\alpha }{n(\alpha -2)}$, we choose $\beta =\frac{\alpha }{2}$ and we assume
the condition $1\leq \frac{\alpha }{2}<\frac{2\alpha }{n(\alpha -2)}$, which
is equivalent to $2\leq \alpha <2+\frac{4}{n}$. Recalling that we already
choose $\alpha =2\frac{q}{p}$, we finally impose the bound on $\frac{q}{p}$ 
\begin{equation}
1\leq \tfrac{q}{p}<1+\tfrac{2}{n}\,.
\label{bound of q/p in the anisotropic case}
\end{equation}%
As a consequence of Theorem \ref{Lipschitz continuity result}, with
parameters $\alpha =2\frac{q}{p}$, $\beta =\frac{\alpha }{2}=\frac{q}{p}$, $%
\gamma =1$, we have proved the following result.

\begin{example}[Lipschitz continuity result for anisotropic energy
inte\-grals]
\label{anisotr result} Let $2\leq p\leq q$ satisfy (\ref{bound of q/p in the
anisotropic case}). Every local minimizer in the Sobolev class (\ref{a
priori smooth class for local minimizers}) of the energy integral (\ref%
{Anisotropic integral}), or of the integral in (\ref{Anisotropic integral
with p=2}) when $p=2$, is locally Lipschitz continuous in $\Omega $ and
satisfies the uniform gradient estimate and the estimates for the $n\times n$
matrix $D^{2}u$ of the second derivatives of $u$ stated in Theorem \ref%
{Lipschitz continuity result}.
\end{example}

\begin{remark}
Similarly to the energy integrals (\ref{Anisotropic integral with p=2}),(\ref%
{Anisotropic integral}) considered in this Section \ref{Section: Anisotropic
energy integrals}, it is possible to deal with 
\begin{equation*}
\int_{\Omega }(\,\sum\limits_{i,j=1}^{n}a_{ij}\left( x\right)
u_{x_{i}}u_{x_{j}}+a\left( x\right) \left\vert u_{x_{n}}\right\vert
^{q}\,)\,dx\,\;\;\;\text{or}\;\;\;\int_{\Omega }\left\{ h\left( x,Du\right)
+a\left( x\right) \left\vert u_{x_{n}}\right\vert ^{q}\right\} \,dx\,,
\end{equation*}%
with a locally Lipschitz continuous coefficient $a\left( x\right) \geq 0$ in 
$\Omega $. Then the proof should proceed similarly to the proof given in
this section, by changing the choice of $\gamma =1$ with $\gamma =1+\delta $%
, with $\delta >0$ as explicitly described in Remark \ref{alphabetagamma}.
If $\delta >0$ $($and not $\delta =0$ as in the above proof$)$, then the
bound on the ratio $\frac{q}{p}$ in (\ref{bound of q/p in the anisotropic
case}) becomes more strict.
\end{remark}

\subsection{Exponential growth}

We consider the energy integral 
\begin{equation}
\int_{\Omega }f\left( x,Du\right) \,dx=\int_{\Omega }e^{a(x)|Du|^{\tau
}}\,dx\,  \label{exp integral}
\end{equation}%
related to the \textit{exponential growth case} $f(x,\xi )=e^{a(x)|\xi
|^{\tau }}$, with $a(x)$ positive locally Lipschitz function and $\tau \geq
2 $. For simplicity, here we consider $\tau =2$. Under the notations of
Proposition \ref{quadratic form} we have $f(x,\xi )=g\left( x,\left\vert \xi
\right\vert \right) $ and 
\begin{equation*}
g\left( x,t\right) =e^{a\left( x\right) t^{2}},\;\;g_{t}\left( x,t\right)
=2tae^{at^{2}},\;\;g_{tt}\left( x,t\right)
=4t^{2}a^{2}e^{at^{2}}+2ae^{at^{2}}.
\end{equation*}%
Then $g_{t}\left( x,t\right) /t=2a\left( x\right) e^{a\left( x\right) t^{2}}$%
, being product of positive increasing functions, is increasing with respect
to $t\in \left[ 0,+\infty \right) $ for all $x\in \Omega $. By \textit{(ii)}
of Proposition \ref{quadratic form}, \eqref{a0} holds with $g_{1}\left(
x,t\right) =g_{t}\left( x,t\right) /t$ and $g_{2}\left( x,t\right)
=g_{tt}\left( x,t\right) $.

Let $\Omega ^{\prime }$ be an open set compactly contained in $\Omega $ and $%
B_{R}$ a generic ball in $\Omega ^{\prime }$. The coefficient belongs to $W_{%
\mathrm{loc}}^{1,\infty }\left( \Omega \right) $ and thus is Lipschitz
continuous in $\Omega ^{\prime }$ with a Lipschitz constant $L$ uniform for
all $B_{R}\subset \Omega ^{\prime }$. We define 
\begin{equation}
p=\min \{a\left( x\right) :x\in \overline{B_{R}}\}\quad \mbox{and}\quad
q=\max \{a\left( x\right) :x\in \overline{B_{R}}\}.  \label{definion_of_p_q}
\end{equation}%
For every $\theta >1$ we can choose the radius $R=R\left( \theta \right) $
of the ball $B_{R}$ small enough such that $q\leq \theta p$; in fact $%
q-p=a\left( x_{2}\right) -a\left( x_{1}\right) \leq L\left\vert
x_{2}-x_{1}\right\vert \leq 2R$ and thus $q\leq p+2R<\theta p$ if we choose,
for instance, $\theta =1+3R/p$. Therefore 
\begin{equation*}
g_{t}\left( x,t\right) /t=2a\left( x\right) e^{a\left( x\right) t^{2}}\geq
2pe^{pt^{2}},\;\;\;\forall \;\left( x,t\right) \in \overline{B_{R}}\times %
\left[ 0,+\infty \right) \,
\end{equation*}%
and similarly, being $q\leq \theta p$, $g_{tt}\left( x,t\right) \leq 4t^{2}%
\left[ \theta p\right] ^{2}e^{\theta pt^{2}}+2\theta pe^{\theta pt^{2}}$.
Then, by considering the infimum with respect to $x\in B_{R}$ of $%
g_{t}\left( x,t\right) /t$ and the supremum with respect to $x\in B_{R}$ of $%
g_{tt}\left( x,t\right) $, the ellipticity condition \eqref{ellipticity} is
satisfied in $B_{R}$ with 
\begin{equation}
g_{1}(t)=c_{1}e^{pt^{2}}\quad \mbox{and}\quad g_{2}(t)=c_{2}\left(
1+t^{2}\right) e^{qt^{2}},  \label{g1g2exp}
\end{equation}%
and $c_{1}=2p$, $c_{2}=\left( 4p\right) ^{2}$ (with $\theta >1$ also bounded
above by $2$). %%%%%%%%%%%
Firstly, we verify \eqref{11M}. Then for $t\geq t_{0}\geq 1$, 
\begin{equation*}
\int_{0}^{t}\sqrt{g_{1}(s)}\,ds\geq \int_{0}^{t_{0}}\sqrt{g_{1}(s)}%
\,ds+p\int_{t_{0}}^{t}e^{\frac{ps^{2}}{2}}\,s\,ds=c+p\,e^{\frac{p}{2}t^{2}}.
\end{equation*}%
We apply Proposition \ref{auxiliary property} with $h(t):=((e^{qt^{2}}\left(
1+t^{2}\right) )^{2\gamma -1}\,t^{2})\,/\,(1+e^{\frac{p}{2}t^{2}})^{\alpha }$
and we require 
\begin{equation}
q(2\gamma -1)\leq \alpha \tfrac{p}{2}  \label{preupperboundratioqp}
\end{equation}%
in order to get a finite limit of $h(t)$ as $t\rightarrow +\infty $. We use
here the possibility stated after the definition \eqref{definion_of_p_q} of $%
p,q$; precisely, $\frac{q}{p}=\theta $, with $\theta $ arbitrarily close to $%
1$. Then \eqref{preupperboundratioqp} becomes $\theta p(2\gamma -1)\leq
\alpha p/2$. As in Remark \ref{alphabetagamma} we pose $\gamma =1+\delta $,
therefore we get $\theta p\,2\delta \leq p\left( \frac{\alpha }{2}-\theta
\right) $. Note that $\theta p$ is a bounded quantity, since $1\leq \theta
p\leq 2p$. We can choose $\alpha >2$ and $\theta $ sufficiently close to $1$%
, so that the right hand side is positive. Therefore we can also fix $\delta 
$ sufficiently close to $0$ in order to obtain the validity of the
inequality in \eqref{preupperboundratioqp}. For these values of $\alpha $
and $\gamma $ assumption \eqref{11M} holds. To verify \eqref{12M}, it is
enough to prove that 
\begin{equation}
(e^{qt^{2}}\left( 1+t^{2}\right) )^{2\gamma -1}\,t^{2\gamma }\leq
M\,(1+e^{pt^{2}})^{\beta }.  \label{12Mexp}
\end{equation}%
Proceeding as in the previous lines, \eqref{12Mexp} holds if $\theta
(2\delta +1)\leq \beta $. The previous inequality is trivially true for $%
\beta >1$, $\delta $ sufficiently close to $0$ and $\theta $ sufficiently
close to $1$. Now, we calculate $g_{tx_{k}}\left( x,\xi \right)
=2a_{x_{k}}(x)te^{a(x)t^{2}}\left( 1+a(x)t^{2}\right) $. Since $a(x)$ is a
locally Lipschitz function, then there exists a constant $L>0$ such that $%
\left\vert a_{x_{k}}(x)\right\vert \leq L$, so hypothesis 
\eqref{growth conditions
A} is satisfied with 
\begin{equation}
g_{3}(t)=c_{3}e^{\theta pt^{2}}t\left( 1+t^{2}\right) ,  \label{g3exp}
\end{equation}%
where $c_{3}=(1+2p)2L$ (here we have used that $q\leq \theta p$ and $%
1<\theta <2$). To verify \eqref{Assumption 3}, we should prove that 
\begin{equation*}
e^{\theta pt^{2}}t\left( 1+t^{2}\right) \leq M(1+t^{\gamma })\,(e^{pt^{2}})^{%
\frac{1}{2}}\,(e^{\theta pt^{2}}\left( 1+t^{2}\right) )^{\gamma -\frac{1}{2}%
}.
\end{equation*}%
If $t\rightarrow 0^{+}$, the left hand side of the inequality goes to $0$,
while the right hand side converges to a positive number, so %
\eqref{Assumption 3} holds true. When $t\rightarrow +\infty $, %
\eqref{Assumption 3} is reduced to the exponents' inequality $\theta \left( 
\frac{1}{2}-\delta \right) \leq \frac{1}{2}$, which is true by choosing $%
\delta $ sufficiently close to $0$ and $\theta $ sufficiently close to $1$.

As a consequence, we get the following consequence of Theorem \ref{Lipschitz
continuity result}, with parameters $\beta= \frac{\alpha}{2}+\delta$ and $%
\gamma= 1+\delta$, where $2<\alpha <2^*$, $1<\beta < 1+\frac{2}{n}$ and $0<
\delta <\frac{4}{n(n-2)}$ (see Remark \ref{alphabetagamma}).

\begin{example}[Lipschitz continuity result under exponential growth]
\label{Lipschitz continuity result exp growth} Every smooth local minimizer
of the energy integral (\ref{exp integral}) is locally Lipschitz continuous
in $\Omega $ and satisfies the uniform gradient estimate (\ref{gradient
bound}) and the estimates (\ref{bound on the second derivatives}) for the $%
n\times n$ matrix $D^{2}u$ of the second derivatives of $u$.
\end{example}

\subsection{$p(x)-$Laplacian and logarithm $p(x)-$Laplacian}

We consider the energy integral 
\begin{equation}
\int_{\Omega }f\left( x,Du\right) \,dx=\int_{\Omega }\left\vert
Du\right\vert ^{p(x)}\,dx\,  \label{p lapl integralDeg}
\end{equation}%
related to the \textit{$p(x)-$Laplacian degenerate case} $f(x,\xi
)=\left\vert \xi \right\vert ^{p(x)}$, with variable exponent $p(x)\in W_{%
\mathrm{loc}}^{1,\infty }(\Omega )$, $p(x)\geq 2$. Under the notations of
Proposition \ref{quadratic form} we have $f(x,\xi )=g\left( x,\left\vert \xi
\right\vert \right) $ and 
\begin{equation*}
g\left( x,t\right) =t^{p(x)},\quad g_{t}\left( x,t\right)
=p(x)t^{p(x)-1},\quad g_{tt}\left( x,t\right) =p(x)\left( p(x)-1\right)
t^{p(x)-2}.
\end{equation*}%
Then $g_{t}\left( x,t\right) /t=p(x)t^{p(x)-2}$ is locally Lipschitz
continuous with respect to $t\in \left[ 0,+\infty \right) $ for all $x\in
\Omega $. By \textit{(i)} of Proposition \ref{quadratic form}, \eqref{a0}
holds with 
\begin{equation*}
g_{1}\left( x,t\right) =p(x)t^{p(x)-2}\quad \mbox{and}\quad g_{2}\left(
x,t\right) =p(x)\left( p(x)-1\right) t^{p(x)-2}.
\end{equation*}%
Let $\Omega ^{\prime }$ be an open set compactly contained in $\Omega $ and $%
B_{R}$ a generic ball in $\Omega ^{\prime }$. We define $p=\min \{p\left(
x\right) :x\in \overline{B_{R}}\}$, $q=\max \{p\left( x\right) :x\in 
\overline{B_{R}}\}$. Given $\theta >1$, we choose the radius $R$ of the ball 
$B_{R}$ small enough such that $q\leq \theta p$. Therefore, for every $x\in 
\overline{B_{R}}$ and $t>1$, $g_{t}\left( x,t\right) /t\geq pt^{p-2}$, and
analogously, $g_{tt}(x,t)\leq \theta p(\theta p-1)t^{\theta p-2}$. Then, the
ellipticity condition \eqref{ellipticity} is satisfied in $B_{R}$ with 
\begin{equation}
g_{1}(t)=c_{1}t^{p-2}\quad \mbox{and}\quad g_{2}(t)=c_{2}t^{\theta p-2},
\label{g1g2p(x)Deg}
\end{equation}%
and $c_{1}=p$, $c_{2}=2p(2p-1)$ (where we used that $\theta >1$ is also
bounded above by $2$). Similarly for $t\in \lbrack 0,1]$, indeed in this
case it is sufficient to interchange the supremum with the infimum and
viceversa. So that we obtain $g_{1}(t)=c_{1}t^{\theta p-2}$ and $%
g_{2}(t)=c_{2}t^{p-2}.$ We observe that hypothesis \eqref{11M} is
automatically satisfied when $0\leq t\leq 1$, since the left hand side is
bounded from above and the right hand side is bounded by a positive constant
from below. In order to verify \eqref{11M} for $t>1$, we need to prove that $%
\left( c_{2}t^{\theta p-2}\right) ^{2\gamma -1}t^{2}\leq M\left( 1+\tfrac{2%
\sqrt{c_{1}}}{p}t^{\frac{p}{2}\alpha }\right)$ which, by Proposition \ref%
{auxiliary property}, follows from the condition $\left( \theta p-2\right)
\left( 2\gamma -1\right) +2\leq p\frac{\alpha }{2}$. As in Remark \ref%
{alphabetagamma} we pose $\gamma =1+\delta $, which gives 
\begin{equation}
\left( \theta p-2\right) 2\delta \leq p\left( \tfrac{\alpha }{2}-\theta
\right) \,.  \label{p(x) 3Deg}
\end{equation}%
Note that $\left( \theta p-2\right) $ is a bounded quantity, since $0\leq
\theta p-2\leq 2\left( p-1\right) $. The right hand side is positive if we
fix $\alpha >2$ and $\theta $ sufficiently close to $1$. Therefore we can
also fix $\delta $ sufficiently close to $0$ in order to obtain the validity
of the inequality in \eqref{p(x) 3Deg}. For these values of $\alpha $ and $%
\gamma $ assumption \eqref{11M} holds.

\noindent In order to verify \eqref{12M}, it is enough to show that 
\begin{equation}
\left( c_{2}t^{\theta p-2}\right) ^{2\gamma -1}t^{2\gamma }\leq M\left\{
1+t^{p}\right\} ^{\beta }.  \label{c_2 t}
\end{equation}%
Using Proposition \ref{auxiliary property}, inequality \eqref{c_2 t} is true
by imposing the following condition $(\theta p-2)(2\gamma -1)+2\gamma \leq
p\beta $. As previously done, by Remark \ref{alphabetagamma} we pose $\gamma
=1+\delta $, obtaining 
\begin{equation}
\left( \theta p-1\right) 2\delta \leq p\left( \beta -\theta \right) \,.
\label{p(x) 4Deg}
\end{equation}%
Note that $\left( \theta p-1\right) $ is a bounded quantity, since $0\leq
\theta p-1\leq 2p-1$. The right hand side is positive if we fix $\beta >1$
and $\theta $ sufficiently close to $1$. Therefore we can also fix $\delta $
sufficiently close to $0$ in order to obtain the validity of the inequality
in \eqref{p(x) 4Deg}. For these values of $\beta $ and $\gamma $ assumption %
\eqref{12M} holds. Now we compute $g_{tx_{k}}(x,t)=p_{x_{k}}(x)t^{p(x)-1}%
\left( 1+p(x)\log t\right) $. We observe that, even if $\left\vert
p_{x_{k}}\left( x\right) \right\vert $ is locally bounded, because of the
logarithmic factor, we cannot bound $\left\vert g_{tx_{k}}(x,t)\right\vert $
only with the exponential term $t^{p(x)-1}$. However, since $\log
t<t^{\omega }/\omega $ for every $\omega >0$, we can find a constant $c_{3}$%
, which depends on $\omega $ and on the $L^{\infty }\left( B_{R}\right) $%
-bounds of $p(x)$ and $p_{x_{k}}\left( x\right) $, such that $\left\vert
g_{tx_{k}}(x,t)\right\vert \leq c_{3}\left( 1+t^{\theta p-1+\omega }\right)
. $ So we fix 
\begin{equation}
g_{3}(t)=c_{3}\left( 1+t^{\theta p-1+\omega }\right) \,.  \label{g3plaplDeg}
\end{equation}%
In order to verify \eqref{Assumption 3}, we need to find a constant $M$ such
that 
\begin{equation}
c_{3}\left( 1+t^{\theta p-1+\omega }\right) \,\leq M(1+t^{\gamma })\,\left(
c_{1}t^{p-2}\right) ^{\frac{1}{2}}\,\left( c_{2}t^{\theta p-2}\right)
^{\gamma -\frac{1}{2}}.  \label{assump3p(x)lapl}
\end{equation}%
We apply Proposition \ref{auxiliary property}, imposing the following
inequality on the exponents 
\begin{equation}
\theta p\left( \tfrac{3}{2}-\gamma \right) +\omega \leq \tfrac{p}{2}%
+1-\gamma \,.  \label{p(x) a}
\end{equation}%
We can verify that $\gamma =1$ is not a possible choice, since $\tfrac{1}{2}%
q+\omega \leq \tfrac{1}{2}\theta p+\omega \leq \tfrac{1}{2}p$ is a false
inequality even if $q=p$. Then we follow Remark \ref{alphabetagamma} by
posing $\gamma =1+\delta $ (and $\beta =\frac{\alpha }{2}+\delta $); from (%
\ref{p(x) a}) we get $\theta p\left( \tfrac{1}{2}-\delta \right) +\omega
\leq \tfrac{p}{2}-\delta $, or equivalently $\tfrac{1}{2}\theta p+\omega
\leq \tfrac{p}{2}+\delta \left( q-1\right) $, which in terms of the
parameter $\delta $ means (we choose for $\delta $ the minimum possible
value; i.e., when the equality sign holds) 
\begin{equation}
\delta =\tfrac{\left( \theta -1\right) p+2\omega }{2\left( \theta p-1\right) 
}\,.  \label{p(x) b}
\end{equation}%
Finally we observe that the value of $\delta >0$ in (\ref{p(x) b}) can be
fixed as small as we like, by choosing $\theta $ close to $1$ and $\omega $
close to $0$. Therefore we can fix $\theta >1$ and $\omega >0$ such that the
condition $0\leq \delta <\frac{4}{n(n-2)}$ in Remark \ref{alphabetagamma} is
satisfied.

At this point, we can state the following consequence of Theorem \ref%
{Lipschitz continuity result}.

\begin{example}[Lipschitz continuity for $p(x)$ Laplacian integral]
\label{Lipschitz continuity result p(x) Laplacian Deg} There exist $\alpha
>2 $, $\beta >1$, $\gamma >1$ and functions $g_{1}$, $g_{2}$, $g_{3}$
defined as (\ref{g1g2p(x)Deg}) and (\ref{g3plaplDeg}) satisfying the
ellipticity and growth conditions (\ref{ellipticity})-(\ref{condition on
beta}), such that every smooth local minimizer of the energy integral (\ref%
{p lapl integralDeg}) is locally Lipschitz continuous in $\Omega $ and
satisfies the uniform gradient estimate (\ref{gradient bound}) and the
estimates (\ref{bound on the second derivatives}) for the $n\times n$ matrix 
$D^{2}u$ of the second derivatives of $u$.
\end{example}

Similar computations can be carried out for the \textit{Orlicz type energy
functionals $($logarithm $p(x)-$Laplacian$)$} 
\begin{equation*}
f\left( x,\xi \right) =\left\vert \xi \right\vert ^{p(x)}\log (1+\left\vert
\xi \right\vert ^{2}),
\end{equation*}%
where $p(x)\geq 2$ is a local Lipschitz continuous exponent. In this case we
have to test the hypotheses set under the choice 
\begin{equation*}
g_{1}\left( t\right) =t^{p-2}\log (1+t^{2}),\quad g_{2}\left( t\right)
=t^{\theta p-2}\left( \log (1+t^{2})+1\right) \quad g_{3}(t)=\left(
1+t^{\theta p-1+\omega }\right) ,
\end{equation*}%
where we have omitted multiplicative constants for simplicity.

%%%%%%%%%%%%%%%%%%%%%%%%%%%%%%%%%%%%%%%%%%%%%%%%%%%%%%%%%%%%%%%%%%%%%%%%%%%%%%%%%%%%%%%%%%%%%%%%%%%%%%%%%%%%%%%%

\subsection{Double phase case\label{Degenerate double phase case}}

We consider the energy integral 
\begin{equation}
\int_{\Omega }f\left( x,Du\right) \,dx=\int_{\Omega }\left\{\left\vert
Du\right\vert ^{p}+a(x)\left\vert Du\right\vert ^{q}\right\} \,dx\,
\label{deg double phase integral}
\end{equation}%
related to the \textit{double phase degenerate case} $f\left( x,\xi \right)
=\left\vert \xi\right\vert ^{p}+a(x)\left\vert \xi\right\vert ^{q}, \label%
{deg original case}$ with $2\leq p\leq q$ and $a$ nonnegative locally
Lipschitz continuous function in $\Omega $. We fix 
\begin{equation*}
g_{1}\left( \left\vert \xi \right\vert \right) =|\xi |^{p-2,}\quad
g_{2}\left( \left\vert \xi \right\vert \right) =|\xi |^{q-2} \quad \mbox{and}%
\quad g_{3}\left( \left\vert \xi \right\vert \right) =|\xi |^{q-1},
\end{equation*}%
where we have omitted multiplicative constants for simplicity.

%%%%%%%%%%%%%%%%%%%%%%%%%%%%%%%%%%%%%%%%%%%%%%%%%%%%
We introduce a further exponent, greater than $q$, at the same distance $q-p$
from $q$; precisely $q+\left( q-p\right) =2q-p$. Then for any fixed $b\geq 0$%
, we consider the auxiliary variational problem, related to a \textit{%
multiple phase}, of the form 
\begin{equation}
f\left( x,\xi \right) =\left\vert \xi \right\vert ^{p}+a\left( x\right)
\left\vert \xi \right\vert ^{q}+b\left\vert \xi \right\vert ^{2q-p}
\label{deg auxiliary case}
\end{equation}%
and $g_{1}\left( \left\vert \xi \right\vert \right) =\left\vert \xi
\right\vert ^{p-2}$, $g_{2}\left( \left\vert \xi \right\vert \right)
=\left\vert \xi \right\vert ^{2q-p-2}$, $g_{3}\left( \left\vert \xi
\right\vert \right) =\left\vert \xi \right\vert ^{q-1}$. We apply the
regularity Theorem \ref{Lipschitz continuity result}. In particular
assumption (\ref{Assumption 3}) trivially holds with $\gamma =1$. With the
aim to test assumption \eqref{11M}, we note that 
\begin{equation*}
\int_{0}^{t}\sqrt{g_{1}(s)}ds=\int_{0}^{t}s^{\frac{p-2}{2}}ds=\tfrac{2\,t^{%
\frac{p}{2}}}{p};
\end{equation*}%
therefore in order to obtain the validity of \eqref{11M} we must impose $%
2q-p-2+2=2q-p\leq \frac{p}{2}\alpha $, that is $\frac{q}{p}\leq 1+\frac{%
\alpha }{2}$. Taking into account the bound $2\leq \alpha <2^{\ast }\;%
\underset{\text{if }n>2}{=}\;\;\frac{2n}{n-2}$ for $\alpha $ in 
\eqref{condition on
alpha} and letting $\alpha $ converge to the limit value $2^{\ast }$ we
finally obtain 
\begin{equation}
\tfrac{q}{p}<1+\tfrac{2^{\ast }}{2}\;\underset{\text{if }n>2}{=}\;\;\tfrac{n%
}{n-2}=1+\tfrac{2}{n-2}\,.  \label{deg bound on q/p for the double phase}
\end{equation}%
Assumption \eqref{12M} requires that $t^{2q-p}\leq M\left\{
1+t^{p}+a(x)t^{q}+bt^{2q-p}\right\} ^{\beta }$, that is $\beta \geq 1$,
which is true by assumption \eqref{condition on beta}.

We remark that in the above considerations we used a nonnegative constant
coefficient $b$ which can be chosen equal to zero too, the case which
corresponds to the original energy integral \eqref{deg double phase integral}%
. Thus we have proved the following result, as a consequence of the general
Theorem \ref{Lipschitz continuity result}.

\begin{example}
Under the bound (\ref{deg bound on q/p for the double phase}) on the ratio $%
q/p$ every local minimizer $u\in W_{\mathrm{loc}}^{1,2q-p}\left( \Omega
\right) $ to the double phase energy integral in (\ref{deg double phase
integral}) (or, when $b\neq 0,$ a minimizer of the integral related to the
auxiliary integrand (\ref{deg auxiliary case}) as well) is locally Lipschitz
continuous in $\Omega $ and satisfies the uniform gradient estimate (\ref%
{gradient bound}) and the estimates (\ref{bound on the second derivatives})
for the $n\times n$ matrix $D^{2}u$ of the second derivatives of $u$.
\end{example}

\bigskip

\section{Proof of Theorem \protect\ref{Lipschitz continuity result}}

\label{Section of the proofs}

\subsection{Step 1. Second variation}

We consider a local minimizer $u$ of the energy integral (\ref{energy
integral}) under the supplementary assumptions 
\eqref{a priori smooth class
for local minimizers}. The local minimizer $u$ satisfies the Euler's first
variation in the weak form 
\begin{equation*}
\int_{\Omega }\,\sum_{i=1}^{n}f_{\xi _{i}}\left( x,Du\right) \psi
_{x_{i}}\,dx=0\,,
\end{equation*}%
for every $\psi \in W_{0}^{1,2}\left( \Omega \right) $. Since $u\in W_{%
\mathrm{loc}}^{2,2}\left( \Omega \right) $, we consider a generic integer $%
k\in \left\{ 1,2,\ldots ,n\right\} $ and $\psi =-\frac{\partial \varphi }{%
\partial x_{k}}=-\varphi _{x_{k}}$, where $\varphi \in W_{0}^{1,2}\left(
\Omega \right) $ is a generic test function. By integrating by parts we get 
\begin{equation}
\int_{\Omega }\left( \sum_{i,j=1}^{n}f_{\xi _{i}\xi _{j}}\left( x,Du\right)
u_{x_{j}x_{k}}+\sum_{i=1}^{n}f_{\xi _{i}x_{k}}\left( x,Du\right) \right)
\varphi _{x_{i}}\,dx=0\,.  \label{1 in Step 1}
\end{equation}%
We make a further choice of the test function $\varphi $, by posing $\varphi
=\eta ^{2}u_{x_{k}}\Phi \left( \left\vert Du\right\vert \right) $, where $%
\eta \in W_{0}^{1,2}\left( \Omega \right) $ and $\Phi :\left( 0,+\infty
\right) \rightarrow \left( 0,+\infty \right) $ is a nonnegative, increasing,
locally Lipschitz continuous function in $\left( 0,+\infty \right) $, to be
chosen later. Then 
\begin{equation*}
\varphi _{x_{i}}=2\eta \eta _{x_{i}}u_{x_{k}}\Phi \left( \left\vert
Du\right\vert \right) +\eta ^{2}u_{x_{k}x_{i}}\Phi \left( \left\vert
Du\right\vert \right) +\eta ^{2}u_{x_{k}}\Phi ^{\prime }\left( \left\vert
Du\right\vert \right) \left\vert Du\right\vert _{x_{i}}.
\end{equation*}

\subsubsection{Terms to estimate from \protect\fbox{\textbf{1}} to \protect%
\fbox{\textbf{6}}\label{Section: all the addenda}}

The left hand side in (\ref{1 in Step 1}) now splits into six addenda 
\begin{align}
& \fbox{\textbf{1}}\;\;\;\int_{\Omega }2\eta \Phi \left( \left\vert
Du\right\vert \right) \sum_{i,j=1}^{n}f_{\xi _{i}\xi _{j}}\left( x,Du\right)
u_{x_{j}x_{k}}\eta _{x_{i}}u_{x_{k}}\,dx  \notag  \label{2 in Step 1} \\
& \fbox{\textbf{2}}\;\;\;+\int_{\Omega }\eta ^{2}\Phi \left( \left\vert
Du\right\vert \right) \sum_{i,j=1}^{n}f_{\xi _{i}\xi _{j}}\left( x,Du\right)
u_{x_{j}x_{k}}u_{x_{k}x_{i}}\,dx  \notag \\
& \fbox{\textbf{3}}\;\;\;+\int_{\Omega }\eta ^{2}\Phi ^{\prime }\left(
\left\vert Du\right\vert \right) \sum_{i,j=1}^{n}f_{\xi _{i}\xi _{j}}\left(
x,Du\right) u_{x_{j}x_{k}}u_{x_{k}}\left\vert Du\right\vert _{x_{i}}\,dx 
\notag \\
& \fbox{\textbf{4}}\;\;\;+\int_{\Omega }2\eta \Phi \left( \left\vert
Du\right\vert \right) \sum_{i=1}^{n}f_{\xi _{i}x_{k}}\left( x,Du\right) \eta
_{x_{i}}u_{x_{k}}\,dx  \notag \\
& \fbox{\textbf{5}}\;\;\;+\int_{\Omega }\eta ^{2}\Phi \left( \left\vert
Du\right\vert \right) \sum_{i=1}^{n}f_{\xi _{i}x_{k}}\left( x,Du\right)
u_{x_{k}x_{i}}\,dx  \notag \\
& \fbox{\textbf{6}}\;\;\;+\int_{\Omega }\eta ^{2}\Phi ^{\prime }\left(
\left\vert Du\right\vert \right) \sum_{i=1}^{n}f_{\xi _{i}x_{k}}\left(
x,Du\right) u_{x_{k}}\left\vert Du\right\vert _{x_{i}}\,dx=0\,.
\end{align}%
In the following sections we estimate (\ref{2 in Step 1}) term by term.

\subsection{Step 2. Main estimates}

In the proof of Theorem \ref{Lipschitz continuity result} we use several
times the elementary inequality $\left( \sqrt{\varepsilon }a-\frac{1}{2\sqrt{%
\varepsilon }}b\right) ^{2}\geq 0$; i.e., $ab\leq \varepsilon a^{2}+\frac{1}{%
4\varepsilon }b^{2}$, valid for all $a,b\in \mathbb{R}$ and $\varepsilon >0$.

\subsubsection{Estimate of the term in $\protect\fbox{\textbf{1}}$}

In order to estimate of the term in $\fbox{\textbf{1}}$ we use here the 
\textit{Cauchy-Schwarz inequality} for symmetric quadratic forms 
\begin{align*}
& \left\vert \int_{\Omega }2\eta \Phi \left( \left\vert Du\right\vert
\right) \sum_{i,j=1}^{n}f_{\xi _{i}\xi _{j}}\left( x,Du\right)
u_{x_{j}x_{k}}\eta _{x_{i}}u_{x_{k}}\,dx\right\vert  \\
& \quad \leq \int_{\Omega }2\Phi \left( \left\vert Du\right\vert \right)
\left( \eta ^{2}\sum_{i,j=1}^{n}f_{\xi _{i}\xi _{j}}\left( x,Du\right)
u_{x_{i}x_{k}}u_{x_{j}x_{k}}\right) ^{1/2} \\
& \quad \quad \cdot \left( \left( u_{x_{k}}\right)
^{2}\sum_{i,j=1}^{n}f_{\xi _{i}\xi _{j}}\left( x,Du\right) \eta _{x_{i}}\eta
_{x_{j}}\right) ^{1/2}\,dx \\
& \quad \leq 2\varepsilon \int_{\Omega }\eta ^{2}\Phi \left( \left\vert
Du\right\vert \right) \sum_{i,j=1}^{n}f_{\xi _{i}\xi _{j}}\left( x,Du\right)
u_{x_{i}x_{k}}u_{x_{j}x_{k}}\,dx \\
& \quad \quad +\frac{1}{2\varepsilon }\int_{\Omega }\Phi \left( \left\vert
Du\right\vert \right) \left( u_{x_{k}}\right) ^{2}\sum_{i,j=1}^{n}f_{\xi
_{i}\xi _{j}}\left( x,Du\right) \eta _{x_{i}}\eta _{x_{j}}\,dx.
\end{align*}%
By the \textit{growth condition} in the right hand side of the (\ref%
{ellipticity}), for $\varepsilon =\frac{1}{4}$ we obtain%
\begin{equation*}
\left\vert \int_{\Omega }2\eta \Phi \left( \left\vert Du\right\vert \right)
\sum_{i,j=1}^{n}f_{\xi _{i}\xi _{j}}\left( x,Du\right) u_{x_{j}x_{k}}\eta
_{x_{i}}u_{x_{k}}\,dx\right\vert 
\end{equation*}%
\thinspace \thinspace \thinspace 
\begin{equation*}
\leq \tfrac{1}{2}\int_{\Omega }\eta ^{2}\Phi \left( \left\vert Du\right\vert
\right) \sum_{i,j=1}^{n}f_{\xi _{i}\xi _{j}}\left( x,Du\right)
u_{x_{i}x_{k}}u_{x_{j}x_{k}}\,dx
\end{equation*}%
\begin{equation}
+2\int_{\Omega }\Phi \left( \left\vert Du\right\vert \right) \left(
u_{x_{k}}\right) ^{2}g_{2}\left( \left\vert Du\right\vert \right)
\,\left\vert D\eta \right\vert ^{2}\,dx\,.  \label{Estimate of the term in 1}
\end{equation}%
We insert this inequality in the equation (\ref{2 in Step 1}) obtained in
the previous Section \ref{Section: all the addenda}. We chose to put some
addenda in the left hand side and some others in the opposite side; the
reason is clear by taking into account the sign of all addenda and/or their
absolute values. In particular $\fbox{\textbf{2}}$ and $\fbox{\textbf{3}}$
are nonnegative as consequence of the ellipticity condition in the left hand
side of (\ref{ellipticity}). Precisely $\fbox{\textbf{3}}$ is nonnegative
after a sum with respect to $k\in \left\{ 1,2,\ldots ,n\right\} $; see
Section \ref{Subsection: Estimate of the term in 3}. Since $\left\vert \,%
\fbox{\textbf{1}}\,\right\vert \leq \frac{1}{2}\,\fbox{\textbf{2}}$ plus the
last term in (\ref{Estimate of the term in 1}), we get 
\begin{equation}
\frac{1}{2}\,\fbox{\textbf{2}}+\fbox{\textbf{3}}\leq \left\vert \,\fbox{%
\textbf{4}}\,\right\vert +\left\vert \,\fbox{\textbf{5}}\,\right\vert
+\left\vert \,\fbox{\textbf{6}}\,\right\vert +2\int_{\Omega }\left\vert
D\eta \right\vert ^{2}\Phi \left( \left\vert Du\right\vert \right) \left(
u_{x_{k}}\right) ^{2}g_{2}\left( \left\vert Du\right\vert \right) \,dx\,.
\label{Estimate of the term in 1 bis}
\end{equation}%
We observe that, when we sum both sides over $k\in \left\{ 1,2,\ldots
,n\right\} $, in the right hand side of (\ref{Estimate of the term in 1 bis}%
) we have $\sum_{k=1}^{n}\left( u_{x_{k}}\right) ^{2}=\left\vert
Du\right\vert ^{2}$.

\subsubsection{Use of ellipticity in the term in $\protect\fbox{\textbf{2}}$}

This is the simplest step among these estimates. We bound from below the
addendum in $\fbox{\textbf{2}}$ by mean of the \textit{ellipticity condition}
in the left hand side of (\ref{ellipticity}) 
\begin{equation*}
\int_{\Omega }\eta ^{2}\Phi \left( \left\vert Du\right\vert \right)
\sum_{i,j=1}^{n}f_{\xi _{i}\xi _{j}}\left( x,Du\right)
u_{x_{j}x_{k}}u_{x_{k}x_{i}}\,dx
\end{equation*}%
\begin{equation*}
\geq \int_{\Omega }\eta ^{2}\Phi \left( \left\vert Du\right\vert \right)
g_{1}\left( \left\vert Du\right\vert \right) \,\left\vert
Du_{x_{k}}\right\vert ^{2}\,dx\,,
\end{equation*}%
by noting that for Sobolev functions $u_{x_{j}x_{k}}=u_{x_{k}x_{j}}$ a.e. in 
$\Omega $. When we sum both sides over $k=1,2,\ldots ,n$ we observe that $%
\sum_{k=1}^{n}\left\vert Du_{x_{k}}\right\vert ^{2}=\left\vert
D^{2}u\right\vert ^{2}$.

\subsubsection{Use of ellipticity in the term in $\protect\fbox{\textbf{3}}$%
\label{Subsection: Estimate of the term in 3}}

In order to estimate the term in $\fbox{\textbf{3}}$, we explicitly
represent $\left\vert Du\right\vert _{x_{i}}$, recalling that for Sobolev
functions the second order mixed derivatives coincide 
\begin{equation}
\left\vert Du\right\vert _{x_{i}}=\frac{\partial }{\partial x_{i}}%
(\sum_{k=1}^{n}\left( u_{x_{k}}\right) ^{2})^{1/2}=\frac{%
\sum_{k=1}^{n}u_{x_{k}}u_{x_{k}x_{i}}}{\left\vert Du\right\vert }\,.
\label{gradient of modulus of Du 1}
\end{equation}%
Thus, 
\begin{equation}
\frac{1}{\left\vert Du\right\vert }\sum_{i,j,k=1}^{n}f_{\xi _{i}\xi
_{j}}\left( x,Du\right) u_{x_{j}x_{k}}u_{x_{k}}(\left\vert Du\right\vert
)_{x_{i}}=\sum_{i,j=1}^{n}f_{\xi _{i}\xi _{j}}\left( x,Du\right) (\left\vert
Du\right\vert )_{x_{j}}(\left\vert Du\right\vert )_{x_{i}}.
\label{gradient of modulus of Du 2}
\end{equation}%
Under the notation $D\left( \left\vert Du\right\vert \right) =\left(
\left\vert Du\right\vert _{x_{1}},\left\vert Du\right\vert _{x_{2}},\ldots
,\left\vert Du\right\vert _{x_{n}}\right) $ and $\sum_{i=1}^{n}\left(
\left\vert Du\right\vert _{x_{i}}\right) ^{2}=\left\vert D\left( \left\vert
Du\right\vert \right) \right\vert ^{2}$, we sum over $k$ in $\fbox{\textbf{3}%
}$ and we get 
\begin{align}
& \int_{\Omega }\eta ^{2}\Phi ^{\prime }\left( \left\vert Du\right\vert
\right) \sum_{i,j,k=1}^{n}f_{\xi _{i}\xi _{j}}\left( x,Du\right)
u_{x_{j}x_{k}}u_{x_{k}}(\left\vert Du\right\vert )_{x_{i}}\,dx
\label{gradient of modulus of Du 3} \\
& \quad =\int_{\Omega }\eta ^{2}\Phi ^{\prime }\left( \left\vert
Du\right\vert \right) \left\vert Du\right\vert \sum_{i,j=1}^{n}f_{\xi
_{i}\xi _{j}}\left( x,Du\right) (\left\vert Du\right\vert
)_{x_{j}}(\left\vert Du\right\vert )_{x_{i}}\,dx  \notag \\
& \quad \geq \int_{\Omega }\eta ^{2}\Phi ^{\prime }\left( \left\vert
Du\right\vert \right) \left\vert Du\right\vert g_{1}\left( \left\vert
Du\right\vert \right) \,\left\vert D\left( \left\vert Du\right\vert \right)
\right\vert ^{2}\,dx,  \notag
\end{align}%
where in the last inequality we have used again the \textit{ellipticity
condition} in the left hand side of (\ref{ellipticity}).

\begin{remark}
We use in (\ref{gradient of modulus of Du 1}) the \textit{Cauchy-Schwarz
inequality} 
\begin{equation*}
\left\vert Du\right\vert _{x_{i}}=\frac{\sum_{k=1}^{n}u_{x_{k}}u_{x_{k}x_{i}}%
}{\left\vert Du\right\vert }\leq \frac{\left\vert Du\right\vert
(\sum_{k=1}^{n}\left( u_{x_{k}x_{i}}\right) ^{2})^{\frac{1}{2}}}{\left\vert
Du\right\vert }=(\sum_{k=1}^{n}\left( u_{x_{i}x_{k}}\right) ^{2})^{\frac{1}{2%
}}=\left\vert Du_{x_{i}}\right\vert .
\end{equation*}%
Therefore 
\begin{equation}
\left\vert D\left( \left\vert Du\right\vert \right) \right\vert
^{2}=\sum_{i=1}^{n}\left( \left\vert Du\right\vert _{x_{i}}\right) ^{2}\leq
\sum_{i=1}^{n}\left\vert Du_{x_{i}}\right\vert ^{2}
\label{gradient of modulus of Du 4}
\end{equation}%
\begin{equation*}
=\sum_{i=1}^{n}(\,\sum_{j=1}^{n}\left( u_{x_{i}x_{j}}\right)
^{2}\,)=\sum_{i,j=1}^{n}\left( u_{x_{i}x_{j}}\right) ^{2}=\left\vert
D^{2}u\right\vert ^{2}.
\end{equation*}%
For the term in $\fbox{\textbf{3}}$, as represented in (\ref{gradient of
modulus of Du 3}), by the growth condition in the right hand side of (\ref%
{ellipticity}) we also deduce the estimate 
\begin{equation*}
0\leq \int_{\Omega }\eta ^{2}\Phi ^{\prime }\left( \left\vert Du\right\vert
\right) \left\vert Du\right\vert \sum_{i,j=1}^{n}f_{\xi _{i}\xi _{j}}\left(
x,Du\right) (\left\vert Du\right\vert )_{x_{j}}(\left\vert Du\right\vert
)_{x_{i}}\,dx
\end{equation*}%
\begin{equation*}
\leq \int_{\Omega }\eta ^{2}\Phi ^{\prime }\left( \left\vert Du\right\vert
\right) \left\vert Du\right\vert g_{2}\left( \left\vert Du\right\vert
\right) \,\left\vert D\left( \left\vert Du\right\vert \right) \right\vert
^{2}\,dx
\end{equation*}%
\begin{equation}
\leq \int_{\Omega }\eta ^{2}\Phi ^{\prime }\left( \left\vert Du\right\vert
\right) \left\vert Du\right\vert g_{2}\left( \left\vert Du\right\vert
\right) \,\left\vert D^{2}u\right\vert ^{2}\,dx\,.
\label{gradient of modulus of Du 5}
\end{equation}%
At the beginning of Section \ref{Section: Step 5}, for the first step of the
iteration procedure, we fix $\Phi \left( t\right) =t$ for all $t\in \left[
0,+\infty \right) $, thus $\Phi ^{\prime }\left( t\right) =1$. Therefore, as
a consequence of (\ref{a priori smooth class for local minimizers}), (\ref%
{gradient of modulus of Du 5}) shows that the term in $\fbox{\textbf{3}}$ is
finite, as well as all the other addenda listed in Section \ref{Section: all
the addenda}, when the iteration procedure starts with $\Phi \left( t\right)
\equiv t$.
\end{remark}

\subsubsection{Estimate of the term in $\protect\fbox{\textbf{4}}$}

For any fixed $k\in \left\{ 1,2,\ldots ,n\right\} $, by the growth
conditions (\ref{growth conditions A}) and assumption (\ref{Assumption 3})
we obtain 
\begin{align*}
& \left\vert \int_{\Omega }2\eta \Phi \left( \left\vert Du\right\vert
\right) \sum_{i=1}^{n}f_{\xi _{i}x_{k}}\left( x,Du\right) \eta
_{x_{i}}u_{x_{k}}\,dx\right\vert \\
& \quad \leq \int_{\Omega }2\eta \left\vert D\eta \right\vert \Phi \left(
\left\vert Du\right\vert \right) \left\vert Du\right\vert
\sum_{i=1}^{n}\left\vert f_{\xi _{i}x_{k}}\left( x,Du\right) \right\vert \,dx
\\
& \quad \leq \int_{\Omega }2\eta \left\vert D\eta \right\vert \Phi \left(
\left\vert Du\right\vert \right) \left\vert Du\right\vert g_{3}\left(
\left\vert Du\right\vert \right) \,dx \\
& \quad \leq 2M\int_{\Omega }\eta \left\vert D\eta \right\vert \Phi \left(
\left\vert Du\right\vert \right) \left\vert Du\right\vert \left(
1+\left\vert Du\right\vert ^{\gamma }\right) \,\left( g_{1}\left( \left\vert
Du\right\vert \right) \right) ^{\frac{1}{2}}\,\left( g_{2}\left( \left\vert
Du\right\vert \right) \right) ^{\gamma -\frac{1}{2}}\,dx \\
& \quad \leq 2M\int_{\Omega }\eta \left\vert D\eta \right\vert \Phi \left(
\left\vert Du\right\vert \right) \left\vert Du\right\vert \left(
1+\left\vert Du\right\vert ^{\gamma }\right) \left( g_{2}\left( \left\vert
Du\right\vert \right) \right) ^{\gamma }\,dx.
\end{align*}%
We sum over $k$ and, since the right hand side does not depend on $k$, we
get 
\begin{align*}
& \sum_{k=1}^{n}\left\vert \int_{\Omega }2\eta \Phi \left( \left\vert
Du\right\vert \right) \sum_{i=1}^{n}f_{\xi _{i}x_{k}}\left( x,Du\right) \eta
_{x_{i}}u_{x_{k}}\,dx\right\vert \\
& \quad \leq 2nM\int_{\Omega }\eta \left\vert D\eta \right\vert \Phi \left(
\left\vert Du\right\vert \right) \left\vert Du\right\vert \left(
1+\left\vert Du\right\vert ^{\gamma }\right) \left( g_{2}\left( \left\vert
Du\right\vert \right) \right) ^{\gamma }\,dx.
\end{align*}

\subsubsection{Estimate of the term in $\protect\fbox{\textbf{5}}$\label%
{Subsection: Estimate of the term in 5}}

We use the growth assumptions \eqref{growth conditions A},%
\eqref{Assumption
3} to get 
\begin{align*}
& \left\vert \int_{\Omega }\eta ^{2}\Phi \left( \left\vert Du\right\vert
\right) \sum_{i=1}^{n}f_{\xi _{i}x_{k}}\left( x,Du\right)
u_{x_{k}x_{i}}\,dx\right\vert \\
& \quad \leq \int_{\Omega }\eta ^{2}\Phi \left( \left\vert Du\right\vert
\right) g_{3}\left( \left\vert Du\right\vert \right) \left\vert
Du_{x_{k}}\right\vert \,dx \\
& \quad \leq M\int_{\Omega }\eta ^{2}\Phi \left( \left\vert Du\right\vert
\right) \left( g_{1}\left( \left\vert Du\right\vert \right) \right) ^{\frac{1%
}{2}}\left( 1+\left\vert Du\right\vert ^{\gamma }\right) \left( g_{1}\left(
\left\vert Du\right\vert \right) \right) ^{\frac{\gamma -1}{2}}\left(
g_{2}\left( \left\vert Du\right\vert \right) \right) ^{\frac{\gamma }{2}%
}\left\vert Du_{x_{k}}\right\vert \,dx \\
& \quad \leq M\int_{\Omega }\eta ^{2}\Phi \left( \left\vert Du\right\vert
\right) \left( g_{1}\left( \left\vert Du\right\vert \right) \right) ^{\frac{1%
}{2}}\left( 1+\left\vert Du\right\vert ^{\gamma }\right) \left( g_{2}\left(
\left\vert Du\right\vert \right) \right) ^{\gamma -\frac{1}{2}}\left\vert
Du_{x_{k}}\right\vert \,dx \\
& \quad \leq \varepsilon M\int_{\Omega }\eta ^{2}\Phi \left( \left\vert
Du\right\vert \right) g_{1}\left( \left\vert Du\right\vert \right)
\,\left\vert Du_{x_{k}}\right\vert ^{2}\,dx \\
& \quad \quad +\frac{M}{4\varepsilon }\int_{\Omega }\eta ^{2}\Phi \left(
\left\vert Du\right\vert \right) \left( 1+\left\vert Du\right\vert ^{\gamma
}\right) ^{2}\left( g_{2}\left( \left\vert Du\right\vert \right) \right)
^{2\gamma -1}\,dx\,.
\end{align*}%
We sum over $k$ and we observe that $\sum_{k=1}^{n}\left\vert
Du_{x_{k}}\right\vert ^{2}=\left\vert D^{2}u\right\vert ^{2}$ 
\begin{equation*}
\sum_{k=1}^{n}\left\vert \int_{\Omega }\eta ^{2}\Phi \left( \left\vert
Du\right\vert \right) \sum_{i=1}^{n}f_{\xi _{i}x_{k}}\left( x,Du\right)
u_{x_{k}x_{i}}\,dx\right\vert
\end{equation*}%
\begin{equation*}
\leq \varepsilon M\int_{\Omega }\eta ^{2}\Phi \left( \left\vert
Du\right\vert \right) g_{1}\left( \left\vert Du\right\vert \right)
\,\left\vert D^{2}u\right\vert ^{2}\,dx
\end{equation*}%
\begin{equation*}
+\frac{nM}{4\varepsilon }\int_{\Omega }\eta ^{2}\Phi \left( \left\vert
Du\right\vert \right) \left( 1+\left\vert Du\right\vert ^{\gamma }\right)
^{2}\left( g_{2}\left( \left\vert Du\right\vert \right) \right) ^{2\gamma
-1}\,dx\,.
\end{equation*}

\subsubsection{Estimate of the term in $\protect\fbox{\textbf{6}}$}

Similarly to the previous Sections, under the notation $D\left( \left\vert
Du\right\vert \right) =\left( \left\vert Du\right\vert _{x_{i}}\right)
_{i=1,2,\ldots ,n}$ and $\sum_{i=1}^{n}\left( \left\vert Du\right\vert
_{x_{i}}\right) ^{2}=\left\vert D\left( \left\vert Du\right\vert \right)
\right\vert ^{2}$, by the Cauchy-Schwarz inequality and 
\eqref{growth
conditions A}, \eqref{Assumption 3}, we obtain

\begin{equation}
\left\vert \int_{\Omega }\eta ^{2}\Phi ^{\prime }\left( \left\vert
Du\right\vert \right) \sum_{i=1}^{n}f_{\xi _{i}x_{k}}\left( x,Du\right)
u_{x_{k}}\left\vert Du\right\vert _{x_{i}}\,dx\right\vert
\label{Estimate of the term in 6 a}
\end{equation}%
\begin{equation*}
\int_{\Omega }\eta ^{2}\Phi ^{\prime }\left( \left\vert Du\right\vert
\right) \left\vert u_{x_{k}}\right\vert (\sum_{i=1}^{n}\left( f_{\xi
_{i}x_{k}}\left( x,Du\right) \right) ^{2})^{\frac{1}{2}}(\sum_{i=1}^{n}%
\left( \left\vert Du\right\vert _{x_{i}}\right) ^{2})^{\frac{1}{2}}\,dx
\end{equation*}%
\begin{equation*}
\leq \sqrt{n}\int_{\Omega }\eta ^{2}\Phi ^{\prime }\left( \left\vert
Du\right\vert \right) \left\vert Du\right\vert g_{3}\left( \left\vert
Du\right\vert \right) \,\left\vert D\left( \left\vert Du\right\vert \right)
\right\vert \,dx
\end{equation*}%
\begin{equation*}
\leq \sqrt{n}M\int_{\Omega }\eta ^{2}\Phi ^{\prime }\left( \left\vert
Du\right\vert \right) \left\vert Du\right\vert \left( g_{1}\left( \left\vert
Du\right\vert \right) \right) ^{\frac{1}{2}}\left( 1+\left\vert
Du\right\vert ^{\gamma }\right) \left( g_{1}\left( \left\vert Du\right\vert
\right) \right) ^{\frac{\gamma -1}{2}}\left( g_{2}\left( \left\vert
Du\right\vert \right) \right) ^{\frac{\gamma }{2}}\,\left\vert D\left(
\left\vert Du\right\vert \right) \right\vert \,dx
\end{equation*}%
\begin{equation*}
\leq \varepsilon \sqrt{n}M\int_{\Omega }\eta ^{2}\Phi ^{\prime }\left(
\left\vert Du\right\vert \right) \left\vert Du\right\vert g_{1}\left(
\left\vert Du\right\vert \right) \,\left\vert D\left( \left\vert
Du\right\vert \right) \right\vert ^{2}\,dx
\end{equation*}%
\begin{equation*}
+\frac{\sqrt{n}M}{4\varepsilon }\int_{\Omega }\eta ^{2}\Phi ^{\prime }\left(
\left\vert Du\right\vert \right) \left\vert Du\right\vert \left(
1+\left\vert Du\right\vert ^{\gamma }\right) ^{2}\left( g_{2}\left(
\left\vert Du\right\vert \right) \right) ^{2\gamma -1}\,dx\,.
\end{equation*}

\subsection{Step 3. Collecting together the previous estimates}

First we make the previous estimates uniform each other with a sum over $%
k\in \left\{ 1,2,\ldots ,n\right\} $ where this has been not jet done. Then
we collect them, starting from the estimate of the term in $\fbox{\textbf{1}}
$ with the inequality (\ref{Estimate of the term in 1 bis}), with the
nonnegative terms $\fbox{\textbf{2}}$ and $\fbox{\textbf{3}}$ in the left
hand side. By also taking the $\varepsilon -$addenda of $\fbox{\textbf{5}}$
and $\fbox{\textbf{6}}$ in the left hand side, we obtain 
\begin{align*}
& (\underset{\text{{\tiny \fbox{\textbf{1}}}}}{\tfrac{1}{2}}-\underset{%
{\tiny \fbox{\textbf{5}}}}{\varepsilon M})\int_{\Omega }\underset{\text{%
{\tiny \fbox{\textbf{2}}}}}{\eta ^{2}\Phi \left( \left\vert Du\right\vert
\right) g_{1}\left( \left\vert Du\right\vert \right) \,\left\vert
D^{2}u\right\vert ^{2}\,dx} \\
& +(1-\underset{{\tiny \fbox{\textbf{6}}}}{\varepsilon \sqrt{n}M}%
)\int_{\Omega }\underset{\text{{\tiny \fbox{\textbf{3}}}}}{\eta ^{2}\Phi
^{\prime }\left( \left\vert Du\right\vert \right) \left\vert Du\right\vert
g_{1}\left( \left\vert Du\right\vert \right) \,\left\vert D\left( \left\vert
Du\right\vert \right) \right\vert ^{2}\,dx} \\
& \quad \leq 2nM\int_{\Omega }\underset{{\tiny \fbox{\textbf{4}}}}{\eta
\left\vert D\eta \right\vert \Phi \left( \left\vert Du\right\vert \right)
\left\vert Du\right\vert \left( 1+\left\vert Du\right\vert ^{\gamma }\right)
\left( g_{2}\left( \left\vert Du\right\vert \right) \right) ^{\gamma }}\,dx
\\
& \quad \quad +\frac{nM}{4\varepsilon }\int_{\Omega }\underset{{\tiny \fbox{%
\textbf{5}}}}{\eta ^{2}\Phi \left( \left\vert Du\right\vert \right) \left(
1+\left\vert Du\right\vert ^{\gamma }\right) ^{2}\left( g_{2}\left(
\left\vert Du\right\vert \right) \right) ^{2\gamma -1}\,dx} \\
& \quad \quad +\frac{\sqrt{n}M}{4\varepsilon }\int_{\Omega }\underset{{\tiny 
\fbox{\textbf{6}}}}{\eta ^{2}\Phi ^{\prime }\left( \left\vert Du\right\vert
\right) \left\vert Du\right\vert \left( 1+\left\vert Du\right\vert ^{\gamma
}\right) ^{2}\left( g_{2}\left( \left\vert Du\right\vert \right) \right)
^{2\gamma -1}\,dx} \\
& \quad \quad +2\int_{\Omega }\underset{\text{{\tiny \fbox{\textbf{1}}}}}{%
\left\vert D\eta \right\vert ^{2}\Phi \left( \left\vert Du\right\vert
\right) \left\vert Du\right\vert ^{2}g_{2}\left( \left\vert Du\right\vert
\right) \,dx}\,.
\end{align*}%
We chose $\varepsilon =\frac{1}{4M}\min \left\{ 1;\frac{3}{\sqrt{n}}\right\} 
$, so that $\frac{1}{2}-\varepsilon M\geq \frac{1}{4}$ and $1-\varepsilon 
\sqrt{n}M\geq \frac{1}{4}$ too. Form the previous estimate we obtain the
existence of a positive constant $c_{0}=c_{0}\left( n,M\right) $ such that 
\begin{equation*}
\tfrac{1}{c_{0}}\int_{\Omega }\eta ^{2}g_{1}\left( \left\vert Du\right\vert
\right) \left( \Phi \left( \left\vert Du\right\vert \right) \,\left\vert
D^{2}u\right\vert ^{2}+\Phi ^{\prime }\left( \left\vert Du\right\vert
\right) \left\vert Du\right\vert \,\left\vert D\left( \left\vert
Du\right\vert \right) \right\vert ^{2}\right) \,dx
\end{equation*}%
\begin{equation*}
\leq \int_{\Omega }\eta \left\vert D\eta \right\vert \Phi \left( \left\vert
Du\right\vert \right) \left\vert Du\right\vert \left( 1+\left\vert
Du\right\vert ^{\gamma }\right) \left( g_{2}\left( \left\vert Du\right\vert
\right) \right) ^{\gamma }\,dx
\end{equation*}%
\begin{equation*}
+\int_{\Omega }\eta ^{2}\left( \Phi \left( \left\vert Du\right\vert \right)
+\Phi ^{\prime }\left( \left\vert Du\right\vert \right) \left\vert
Du\right\vert \right) \left( 1+\left\vert Du\right\vert ^{\gamma }\right)
^{2}\left( g_{2}\left( \left\vert Du\right\vert \right) \right) ^{2\gamma
-1}\,dx
\end{equation*}%
\begin{equation*}
+\int_{\Omega }\left\vert D\eta \right\vert ^{2}\Phi \left( \left\vert
Du\right\vert \right) \left\vert Du\right\vert ^{2}g_{2}\left( \left\vert
Du\right\vert \right) \,dx\,.
\end{equation*}%
We make use of the inequality (\ref{gradient of modulus of Du 4}); i.e., $%
\left\vert D\left( \left\vert Du\right\vert \right) \right\vert \leq
\left\vert D^{2}u\right\vert $. Therefore finally we get 
\begin{equation}
\tfrac{1}{c_{0}}\int_{\Omega }\eta ^{2}g_{1}\left( \left\vert Du\right\vert
\right) \left( \Phi \left( \left\vert Du\right\vert \right) +\Phi ^{\prime
}\left( \left\vert Du\right\vert \right) \left\vert Du\right\vert \right)
\left\vert D\left( \left\vert Du\right\vert \right) \right\vert ^{2}\,dx
\label{25M}
\end{equation}%
\begin{equation*}
\leq \int_{\Omega }\eta \left\vert D\eta \right\vert \Phi \left( \left\vert
Du\right\vert \right) \left\vert Du\right\vert \left( 1+\left\vert
Du\right\vert ^{\gamma }\right) \left( g_{2}\left( \left\vert Du\right\vert
\right) \right) ^{\gamma }\,dx
\end{equation*}%
\begin{equation*}
+\int_{\Omega }\eta ^{2}\left( \Phi \left( \left\vert Du\right\vert \right)
+\Phi ^{\prime }\left( \left\vert Du\right\vert \right) \left\vert
Du\right\vert \right) \left( 1+\left\vert Du\right\vert ^{\gamma }\right)
^{2}\left( g_{2}\left( \left\vert Du\right\vert \right) \right) ^{2\gamma
-1}\,dx
\end{equation*}%
\begin{equation*}
+\int_{\Omega }\left\vert D\eta \right\vert ^{2}\Phi \left( \left\vert
Du\right\vert \right) \left\vert Du\right\vert ^{2}g_{2}\left( \left\vert
Du\right\vert \right) \,dx\,.
\end{equation*}

\subsection{Step 4. Use of the Sobolev inequality}

We define the integral function $G:\left[ 0,+\infty \right) \rightarrow %
\left[ 1,+\infty \right) $ by 
\begin{equation}
G(t)=1+\int_{0}^{t}\sqrt{\Phi (s)g_{1}(s)}\,ds.  \label{G}
\end{equation}%
Since $g_{1}$ and $\Phi $ are increasing, then $\int_{0}^{t}\sqrt{\Phi
(s)g_{1}(s)}\leq \sqrt{\Phi (t)g_{1}(t)}\cdot t$ and 
\begin{equation*}
\lbrack G(t)]^{2}\leq 2\left( 1+\left( \int_{0}^{t}\sqrt{\Phi (s)g_{1}(s)}%
\right) ^{2}\right) \leq 2\left( 1+\Phi (t)g_{1}(t)t^{2}\right) \,.
\end{equation*}%
We deduce 
\begin{equation*}
\left\vert D(\eta G(|Du|)\right\vert ^{2}\leq 2|D\eta
|^{2}[G(|Du|)]^{2}+2\eta ^{2}[G^{\prime }(|Du|)]^{2}|D(|Du|)|^{2}
\end{equation*}%
\begin{equation*}
\leq 4|D\eta |^{2}(1+\Phi (|Du|)g_{1}(|Du|)|Du|^{2})+2\eta ^{2}\Phi
(|Du|)g_{1}(|Du|)\,|D(|Du|)|^{2}.
\end{equation*}%
By integrating we get 
\begin{equation}
\int_{\Omega }|D(\eta G(|Du|)|^{2}\,dx\leq 4\int_{\Omega }|D\eta
|^{2}(1+\Phi (|Du|)g_{1}(|Du|)|Du|^{2})\,dx  \label{26M}
\end{equation}%
\begin{equation*}
+2\int_{\Omega }\eta ^{2}\Phi (|Du|)g_{1}(|Du|)\,|D(|Du|)|^{2}\,dx\,.
\end{equation*}%
When compared with the estimate (\ref{25M}), we note that the last addendum
in the right hand side of (\ref{26M}), is less that or equal to the left
hand side of (\ref{25M}). By combining (\ref{25M}), (\ref{26M}) we obtain 
\begin{equation}
\int_{\Omega }|D(\eta G(|Du|)|^{2}\,dx\leq \int_{\Omega }|D\eta
|^{2}(4+2c_{0}\Phi (|Du|)g_{1}(|Du|)|Du|^{2})\,dx  \label{27M}
\end{equation}%
\begin{equation*}
+2c_{0}\int_{\Omega }\eta \left\vert D\eta \right\vert \Phi \left(
\left\vert Du\right\vert \right) \left\vert Du\right\vert \left(
1+\left\vert Du\right\vert ^{\gamma }\right) \left( g_{2}\left( \left\vert
Du\right\vert \right) \right) ^{\gamma }\,dx
\end{equation*}%
\begin{equation*}
+2c_{0}\int_{\Omega }\eta ^{2}\left( \Phi \left( \left\vert Du\right\vert
\right) +\Phi ^{\prime }\left( \left\vert Du\right\vert \right) \left\vert
Du\right\vert \right) \left( 1+\left\vert Du\right\vert ^{\gamma }\right)
^{2}\left( g_{2}\left( \left\vert Du\right\vert \right) \right) ^{2\gamma
-1}\,dx\,.
\end{equation*}%
Recall that $2^{\ast }=\frac{2n}{n-2}$ if $n\geq 3$, while $2^{\ast }$ is
any fixed real number greater than $\alpha $ if $n=2$. By the Sobolev
inequality, there exists a constant $c_{1}$ such that 
\begin{equation}
\left( \int_{\Omega }[\eta G(|Du|)]^{2^{\ast }}\,dx\right) ^{2/2^{\ast
}}\leq c_{1}\int_{\Omega }|D(\eta G(|Du|)|^{2}\,dx\,,  \label{28M}
\end{equation}%
which naturally can be combined with (\ref{27M}), as done in the next
section.

\subsection{Step 5. Choice of the test function $\Phi $\label{Section: Step
5}}

Let us define $\Phi (t)=t^{2\lambda }$ for every $t\geq 0$, with $\lambda
\geq 1$. Making use of \cite[Lemma 3.4 (v)]{Marcellini 1993} we can estimate
the integral function $G(t)$ defined in (\ref{G}) by 
\begin{equation}
G(t)=1+\int_{0}^{t}\sqrt{\Phi (s)g_{1}(s)}\,ds\geq 1+\frac{t^{\lambda }}{%
\lambda +1}\int_{0}^{t}\sqrt{g_{1}(s)}\,ds.  \label{29M}
\end{equation}%
Recall that the function $g_{1}:[0,+\infty )\rightarrow \lbrack 0,+\infty )$
is increasing and not identically zero. Then, there exists $t_{0}>0$ such
that $g_{1}(t)>0$, for every $t\geq t_{0}$. Up to a rescaling, we can assume
that $t_{0}\leq 1$, so that 
\begin{equation}
g_{1}(1)=c_{2}>0,\quad \int_{0}^{1}\sqrt{g_{1}(s)}\,ds=c_{3}>0.  \label{30M}
\end{equation}%
Then, for every $t\geq 1$, we have 
\begin{align}
2\int_{0}^{t}\sqrt{g_{1}(s)}\,ds& \geq c_{3}+\int_{0}^{t}\sqrt{g_{1}(s)}\,ds
\notag  \label{31M} \\
& \geq \min \{c_{3},1\}\left( 1+\int_{0}^{t}\sqrt{g_{1}(s)}\,ds\right) .
\end{align}%
Now, we use assumptions \eqref{12M} and \eqref{condition on beta}. By %
\eqref{12M} and by \eqref{31M}, there exists a constant $c_{4}$ such that 
\begin{equation}
\left( \int_{0}^{t}\sqrt{g_{1}(s)}\,ds\right) ^{\alpha }\geq
c_{4}(g_{2}(t))^{2\gamma -1}t^{2},\quad \forall t\geq 1.  \label{32M}
\end{equation}%
Recall that by \eqref{condition on alpha}, $2\leq \alpha <2^{\ast }$. Then,
by \eqref{30M}, for every $t\geq 1$ we obtain 
\begin{align}
\left( \int_{0}^{t}\sqrt{g_{1}(s)}\,ds\right) ^{2^{\ast }}& \geq \left(
\int_{0}^{t}\sqrt{g_{1}(s)}\,ds\right) ^{\alpha }(c_{2}(t-1)+c_{3})^{2^{\ast
}-\alpha }  \notag  \label{33M} \\
& \geq (\min \{c_{2},c_{3}\})^{2^{\ast }-\alpha }c_{4}(g_{2}(t))^{2\gamma
-1}t^{2^{\ast }-\alpha +2},\quad \forall t\geq 1.
\end{align}%
We note that for the previous step, it's fundamental assuming $\alpha $
strictly less then $2^{\ast }$. By \eqref{33M} and \eqref{29M}, we deduce
that for every $t\geq 1$ 
\begin{align*}
\lbrack G(t)]^{2^{\ast }}& \geq \frac{1}{2}\left( 1+\left( \frac{t^{\lambda }%
}{(\lambda +1)}\int_{0}^{t}\sqrt{g_{1}(s)}\,ds\right) ^{2^{\ast }}\right)  \\
& \geq \frac{1}{2}+\frac{(\min \{c_{2},c_{3}\})^{2^{\ast }-\alpha }c_{4}}{%
2(\lambda +1)^{2}}(g_{2}(t))^{2\gamma -1}t^{2^{\ast }(\lambda +1)-\alpha +2}.
\end{align*}%
That is, there exists a constant $c_{5}$ such that 
\begin{equation*}
c_{5}(\lambda +1)^{2^{\ast }}[G(t)]^{2^{\ast }}\geq 1+(g_{2}(t))^{2\gamma
-1}t^{2^{\ast }(\lambda +1)-\alpha +2},
\end{equation*}%
for every $t\geq 1$, and also for $t\in \lbrack 0,1)$, since the left-hand
side is bounded for $t\in \left( 0,1\right) $ by the constant (independent
of $\gamma $) $1+g_{2}(1)$, while the right-hand side is bounded from below
away from zero. So, for every $t\geq 0$, 
\begin{equation}
c_{5}(\lambda +1)^{2^{\ast }}[G(t)]^{2^{\ast }}\geq 1+(g_{2}(t))^{2\gamma
-1}t^{2^{\ast }(\lambda +1)-\alpha +2}\,.  \label{34M}
\end{equation}%
Since $\Phi (t)=t^{2\lambda }$ and $g_{1}(t)\leq g_{2}(t)$, from \eqref{27M}%
, \eqref{28M} and \eqref{34M}, with positive constants $c_{6}$, $c_{7}$, $%
\ldots $, we obtain 
\begin{equation*}
\left( \int_{\Omega }\eta ^{2^{\ast }}\left\{ 1+\left\vert Du\right\vert
^{2^{\ast }(\lambda +1)-\alpha +2}(g_{2}(\left\vert Du\right\vert
))^{2\gamma -1}\right\} \,dx\right) ^{2/2^{\ast }}
\end{equation*}%
\begin{equation*}
\leq c_{6}\left( \lambda +1\right) ^{2}\left( \int_{\Omega }\eta ^{2^{\ast
}}[G(\left\vert Du\right\vert )]^{2^{\ast }}\,dx\right) ^{2/2^{\ast }}
\end{equation*}%
\begin{equation*}
\leq c_{7}\left( \lambda +1\right) ^{2}\left\{ \int_{\Omega }|D\eta
|^{2}(1+\left\vert Du\right\vert ^{2\lambda }g_{2}(\left\vert Du\right\vert
)\,\left\vert Du\right\vert ^{2})\,dx\right. 
\end{equation*}%
\begin{equation*}
+\int_{\Omega }\eta \left\vert D\eta \right\vert \left\vert Du\right\vert
^{2\lambda +1}\left( 1+\left\vert Du\right\vert ^{\gamma }\right) \left(
g_{2}\left( \left\vert Du\right\vert \right) \right) ^{\gamma }\,dx
\end{equation*}%
\begin{equation*}
+\left. \int_{\Omega }\eta ^{2}\left( |Du|^{2\lambda }+2\lambda
|Du|^{2\lambda }\right) \left( 1+\left\vert Du\right\vert ^{\gamma }\right)
^{2}\left( g_{2}\left( \left\vert Du\right\vert \right) \right) ^{2\gamma
-1}\,dx\right\} \,.
\end{equation*}%
Then we uniform as much as possible the right had side 
\begin{equation}
\left( \int_{\Omega }\eta ^{2^{\ast }}\left\{ 1+\left\vert Du\right\vert
^{2^{\ast }(\lambda +1)-\alpha +2}(g_{2}(\left\vert Du\right\vert
))^{2\gamma -1}\right\} \,dx\right) ^{2/2^{\ast }}  \label{updated 1}
\end{equation}%
\begin{equation*}
\leq c_{8}\left( \lambda +1\right) ^{3}\left\{ \int_{\Omega }|D\eta
|^{2}(1+\left\vert Du\right\vert ^{2\lambda }g_{2}(\left\vert Du\right\vert
)\,\left\vert Du\right\vert ^{2})\,dx\right. 
\end{equation*}%
\begin{equation*}
+\int_{\Omega }\eta \left\vert D\eta \right\vert \left\vert Du\right\vert
^{2\lambda +1}\left( 1+\left\vert Du\right\vert ^{\gamma }\right) \left(
g_{2}\left( \left\vert Du\right\vert \right) \right) ^{\gamma }\,dx
\end{equation*}%
\begin{equation*}
+\left. \int_{\Omega }\eta ^{2}\left( |Du|^{2\lambda }+|Du|^{2\lambda
}\right) \left( 1+\left\vert Du\right\vert ^{\gamma }\right) ^{2}\left(
g_{2}\left( \left\vert Du\right\vert \right) \right) ^{2\gamma
-1}\,dx\right\} \,.
\end{equation*}%
With the aim to reduce the previous estimate to a unique addendum in the
right hand side we recall that $\gamma \geq 1$ and $\lambda \geq 1$.
Moreover $g_{2}:\left( 0,+\infty \right) \rightarrow \left( 0,+\infty
\right) $ is an increasing function such that $g_{2}\left( 1\right) \geq 1$,
and thus also $g_{2}\left( t\right) \geq 1$ for all $t\geq 1$. Thus for
every $t\geq 0$ we have 
\begin{align}
t^{2\lambda }\left( 1+t^{\gamma }\right) ^{2}\left( g_{2}\left( t\right)
\right) ^{2\gamma -1}& \leq \left\{ 
\begin{array}{l}
4\,t^{2\left( \lambda +\gamma \right) }\left( g_{2}\left( t\right) \right)
^{2\gamma -1}\hspace{0.88cm}\text{if}\;\;t\geq 1 \\ 
4\left( g_{2}\left( 1\right) \right) ^{2\gamma -1}\hspace{2cm}\text{if}%
\;\;0\leq t\leq 1%
\end{array}%
\right\}   \notag \\
& \leq \max \left\{ 4\,\left( g_{2}\left( 1\right) \right) ^{2\gamma
-1};4\,t^{2\left( \lambda +\gamma \right) }\left( g_{2}\left( t\right)
\right) ^{2\gamma -1}\right\}   \notag \\
& \leq 4\,\left( g_{2}\left( 1\right) \right) ^{2\gamma -1}+4\,t^{2\left(
\lambda +\gamma \right) }\left( g_{2}\left( t\right) \right) ^{2\gamma -1} 
\notag \\
& \leq 4\,\left( g_{2}\left( 1\right) \right) ^{2\gamma -1}+4\,\left(
g_{2}\left( 1\right) \right) ^{2\gamma -1}\,t^{2\left( \lambda +\gamma
\right) }\left( g_{2}\left( t\right) \right) ^{2\gamma -1}  \notag \\
& =4\,\left( g_{2}\left( 1\right) \right) ^{2\gamma -1}\left( 1+\,t^{2\left(
\lambda +\gamma \right) }\left( g_{2}\left( t\right) \right) ^{2\gamma
-1}\right) \,.  \label{updated 0}
\end{align}%
By proceeding in a similar way we get$_{{}}$ 
\begin{align}
t^{2\lambda +1}(1+t)^{\gamma }\left( g_{2}(t)\right) ^{\gamma }& \leq
\left\{ 
\begin{array}{l}
2\,t^{2\lambda +\gamma +1}\left( g_{2}(1)\right) ^{\gamma }\hspace{0.63cm}%
\text{if}\;\;t\geq 1 \\ 
2\left( g_{2}(1)\right) ^{\gamma }\hspace{1.85cm}\text{if}\;\;0\leq t\leq 1%
\end{array}%
\right\}   \notag \\
& \leq \left\{ 
\begin{array}{l}
2\,t^{2\left( \lambda +\gamma \right) }\left( g_{2}(1)\right) ^{\gamma }%
\hspace{0.7cm}\text{if}\;\;t\geq 1 \\ 
2\left( g_{2}(1)\right) ^{\gamma }\hspace{1.85cm}\text{if}\;\;0\leq t\leq 1%
\end{array}%
\right\}   \notag \\
& \leq 2\left( g_{2}\left( 1\right) \right) ^{2\gamma -1}\left( 1+t^{2\left(
\lambda +\gamma \right) }\left( g_{2}(t)\right) ^{2\gamma -1}\right)
\,,\;\;\;\forall \;t\geq 0\,,  \label{updated 5}
\end{align}%
and 
\begin{align}
t^{2(\lambda +1)}g_{2}(t)& \leq g_{2}(1)\left( 1+t^{2\left( \lambda +\gamma
\right) }\left( g_{2}(t)\right) ^{2\gamma -1}\right)   \notag \\
& \leq \left( g_{2}\left( 1\right) \right) ^{2\gamma -1}\left( 1+t^{2\left(
\lambda +\gamma \right) }\left( g_{2}(t)\right) ^{2\gamma -1}\right)
\,,\;\;\;\forall \;t\geq 0.  \label{updated 4}
\end{align}%
By using \eqref{updated 0}, \eqref{updated 5}, \eqref{updated 4}, from %
\eqref{updated 1} we get the final estimate with only one addendum in the
right hand side 
\begin{align}
& \left( \int_{\Omega }\eta ^{2^{\ast }}[1+|Du|^{2^{\ast }(\lambda
+1)-\alpha +2}(g_{2}(\left\vert Du\right\vert ))^{2\gamma -1}]\,dx\right)
^{2/2^{\ast }}  \notag  \label{updated 6} \\
& \quad \leq c(2\lambda +1)^{3}\int_{\Omega }\left( \eta +\left\vert D\eta
\right\vert \right) ^{2}\left( 1+\left\vert Du\right\vert ^{2\left( \lambda
+\gamma \right) }\left( g_{2}(\left\vert Du\right\vert )\right) ^{2\gamma
-1}\right) \,dx\,.
\end{align}%
Note that the constant $c$ in (\ref{updated 6}) depends on the dimension $n,$
on the other constants $m,M,\gamma $, on the values $g_{1}(1)$, $g_{2}(1)$,
but it is independent of $\lambda $.

Let us denote by $B_{R}$ and $B_{\rho }$ balls compactly contained in $%
\Omega $, of radii respectively $R$, $\rho $, with the same center. Let $%
\eta $ be a test function equal to $1$ in $B_{\rho }$, whose support is
contained in $B_{R}$, such that $\left\vert D\eta \right\vert \leq \frac{2}{%
R-\rho }$. Thus, we obtain 
\begin{align}
& \left( \int_{B_{\rho }}[1+|Du|^{2^{\ast }(\lambda +1)-\alpha
+2}(g_{2}(|Du|))^{2\gamma -1}]\,dx\right) ^{2/2^{\ast }}  \notag  \label{35M}
\\
& \quad \leq c(2\lambda +1)^{3}\int_{B_{R}}\left( \eta ^{2}+|D\eta
|^{2}\right) \left( 1+\left\vert Du\right\vert ^{2\left( \lambda +\gamma
\right) }\left( g_{2}(\left\vert Du\right\vert )\right) ^{2\gamma -1}\right)
\,dx  \notag \\
& \quad \leq c(2\lambda +1)^{3}\left( 1+\frac{4}{(R-\rho )^{2}}\right)
\int_{B_{R}}\left( 1+\left\vert Du\right\vert ^{2\left( \lambda +\gamma
\right) }\left( g_{2}(\left\vert Du\right\vert )\right) ^{2\gamma -1}\right)
\,dx\,.  \notag \\
&
\end{align}

\subsection{Step 6. Iteration}

We define by induction a sequence $\lambda _{k}$ in the following way: 
\begin{equation}
\lambda _{1}=0,\quad \lambda _{k+1}=\frac{2^{\ast }}{2}\lambda _{k}+\frac{%
2^{\ast }-\alpha +2}{2}-\gamma ,\quad \forall k\in \mathbb{N};  \label{36M}
\end{equation}%
we note in particular that $\lambda _{k}$ satisfies the property 
\begin{equation}
2^{\ast }(\lambda _{k}+1)-\alpha +2=2(\lambda _{k+1}+\gamma ),\quad \quad
\forall k\in \mathbb{N}.  \label{37M}
\end{equation}%
It is easy to prove by induction the following representation formula for $%
\lambda _{k}$: 
\begin{equation}
\lambda _{k}=\frac{2^{\ast }-\alpha -2(\gamma -1)}{2^{\ast }-2}[(\frac{%
2^{\ast }}{2})^{k-1}-1],\quad \quad \forall k\in \mathbb{N}.  \label{38M}
\end{equation}%
Since $\alpha \geq 2$, from \eqref{37M}, \eqref{38M}, we deduce the
inequality 
\begin{equation}
(\lambda _{k}+1)2^{\ast }-\alpha +2\geq 2\,\frac{2^{\ast }-a-2(\gamma -1)}{%
2^{\ast }-2}\,\left( \frac{2^{\ast }}{2}\right) ^{k}.  \label{39M}
\end{equation}%
For fixed $R_{0}$ and $\rho _{0}$, for all $k\in \mathbb{N}$, we rewrite %
\eqref{35M} with $R=\rho _{k-1}$ and $\rho =\rho _{k}$, where $\rho
_{k}=\rho _{0}+\tfrac{R_{0}-\rho _{0}}{2^{k}}$; moreover, for $k=1,2,3,...,i$%
, with $i$ fixed in $\mathbb{N}$, we put $\lambda $ equal to $\lambda _{k}$.
By iterating \eqref{35M}, by \eqref{37M} we obtain 
\begin{align}
& \left( \int_{B_{\rho _{i}}}\left( 1+|Du|^{(\lambda _{i+1})2^{\ast
}-a+2}\right) \,\left( g_{2}(\left\vert Du\right\vert )\right) ^{2\gamma
-1}\,dx\right) ^{\frac{2}{2^{\ast }}}  \notag  \label{40M} \\
& \leq c_{7}\int_{B_{R_{0}}}\left( 1+|Du|^{2\gamma }\,\left(
g_{2}(\left\vert Du\right\vert )\right) ^{2\gamma -1}\right) \,dx.
\end{align}%
Since $R-\rho =\rho _{k-1}-\rho _{k}=\tfrac{R_{0}-\rho _{0}}{2^{k}}$ for all 
$k\in \mathbb{N}$, if $n\geq 3$; otherwise, if $n=2$, then for every $%
\epsilon >0$ we can choose $2^{\ast }$ so that 
\begin{equation}
c_{7}=\frac{c_{8}}{(R_{0}-\rho _{0})^{-2-\epsilon }},\quad 
\mbox{for some constant
$c_8$.}  \label{41M}
\end{equation}%
Since $g_{2}$ is increasing, then $g_{2}(t)\geq g_{1}(t)\geq g_{1}(1)>0$,
for all $t\geq 1$. Therefore for $r\geq s\geq 0$, we have 
\begin{equation*}
g_{2}(t)t^{r}+1\geq g_{1}(1)t^{s},\quad \mbox{if $t\geq 1,$}
\end{equation*}%
\begin{equation*}
g_{2}(t)t^{r}+1\geq 1\geq t^{s},\quad \mbox{if $0\leq t\leq 1.$}
\end{equation*}%
Thus, by posing $c_{9}=\min \{g_{1}(1),1\}$, we obtain 
\begin{equation}
g_{2}(t)t^{r}+1\geq c_{9}t^{s},\quad 
\mbox{$\forall t\geq 0$, $\forall r\geq s
\geq 0.$}  \label{42M}
\end{equation}%
Now, we go to the limit in \eqref{40M} as $i\rightarrow +\infty $. We use
the inequalities \eqref{39M}, \eqref{40M}, \eqref{42M} and we obtain 
\begin{align*}
& \sup \{|Du(x)|^{2\left( \frac{2^{\ast }-a-2(\gamma -1)}{2^{\ast }-2}%
\right) }:x\in B_{\rho _{0}}\} \\
& \quad =\lim_{i\rightarrow +\infty }\left( \int_{B_{\rho
_{0}}}|Du|^{2\left( \frac{2^{\ast }-a-2(\gamma -1)}{2^{\ast }-2}\right)
\left( \frac{2^{\ast }}{2}\right) ^{i}}\,dx\right) ^{\left( \frac{2}{2^{\ast
}}\right) ^{i}} \\
& \quad \leq \limsup_{i\rightarrow +\infty }\left( \frac{1}{c_{9}}%
\int_{B_{\rho _{i}}}\left( 1+|Du|^{2^{\ast }(\lambda _{i}+1)-a+2}\left(
g_{2}(\left\vert Du\right\vert )\right) ^{2\gamma -1}\right) \,dx\right)
^{\left( \frac{2}{2^{\ast }}\right) ^{i}} \\
& \quad \leq \limsup_{i\rightarrow +\infty }\frac{c_{7}}{c_{9}}%
\int_{B_{R_{0}}}\left( 1+|Du|^{2\gamma }\left( g_{2}(\left\vert
Du\right\vert )\right) ^{2\gamma -1}\right) \,dx
\end{align*}%
and by the representation of $c_{7}$ in \eqref{41M}, we finally obtain 
\begin{equation}
\Vert Du\Vert _{L^{\infty }(B_{\rho _{0}},\mathbb{R}^{n})}^{2\left( \frac{%
2^{\ast }-a-2(\gamma -1)}{2^{\ast }-2}\right) }\leq \frac{c_{10}}{%
(R_{0}-\rho _{0})^{n}}\int_{B_{R_{0}}}\left( 1+|Du|^{2\gamma }\left(
g_{2}(\left\vert Du\right\vert )\right) ^{2\gamma -1}\right) \,dx.
\label{43M}
\end{equation}

\subsection{Step 7. A priori gradient estimate}

We start putting together the inequalities \eqref{updated 6}, \eqref{27M}
and \eqref{28M}, with $\lambda =0$, that is $\Phi=1$, we get 
\begin{align}  \label{ineq1Step7}
&\left( \int_{\Omega }\left(\eta G(|Du|)\right)^{2^{\ast}}\,dx\right)
^{2/2^{\ast }}  \notag \\
&\quad \leq c\int_{\Omega }\left( \eta^2 +\left| D\eta \right|^2 \right)
\left( 1+\left| Du\right| ^{2\gamma }\left( g_{2}(\left| Du\right| )\right)
^{2\gamma -1}\right) \,dx\,.
\end{align}
We note that, by assumption \eqref{condition on alpha}, $\frac{2^*}{%
\alpha-2+2 \gamma}>1$. Then we can define $\nu$ such that 
\begin{equation}  \label{nu}
1\leq \nu < \frac{2^*}{\alpha-2+2 \gamma}.
\end{equation}

\noindent Following the lines of Step 5, we have, for every $t\geq 1$ 
\begin{align}
\left( \int_{0}^{t}\sqrt{g_{1}(s)}\,ds\right) ^{\frac{2^{\ast}}{\nu}}& \geq
\left( \int_{0}^{t}\sqrt{g_{1}(s)}\,ds\right) ^{\alpha }(c_{2}(t-1)+c_{3})^{ 
\frac{2^{\ast }}{\nu}-\alpha}  \notag  \label{33Mbis} \\
& \geq (\min \{c_{2},c_{3}\})^{\frac{2^{\ast }}{\nu}-\alpha
}c_{4}(g_{2}(t))^{2\gamma -1}t^{\frac{2^{\ast }}{\nu}-\alpha +2}.
\end{align}%
By \eqref{33Mbis} and \eqref{29M}, we deduce that for every $t\geq 1$ 
\begin{align*}
[G(t)]^{\frac{2^{\ast }}{\nu}}& \geq \frac{1}{2} \left(1+\left(\int_{0}^{t}%
\sqrt{g_{1(s)}}\,ds\right)^{\frac{2^{\ast }}{\nu}}\right) \\
& \geq \frac{1}{2}+\frac{(\min \{c_{2},c_{3}\})^{\frac{2^{\ast }}{\nu}%
-\alpha }c_{4}}{2}(g_{2}(t))^{2\gamma -1}t^{\frac{2^{\ast }}{\nu}-\alpha +2}.
\end{align*}%
That is, there exists a constant $c$ such that 
\begin{equation*}
[G(t)]^{\frac{2^{\ast }}{\nu}}\geq c \left(1+(g_{2}(t))^{2\gamma -1}t^{\frac{%
2^{\ast }}{\nu}-\alpha +2}\right),
\end{equation*}%
for every $t\geq 1$, and also for $t\in \lbrack 0,1)$, since the left-hand
side is bounded for $t\in \left( 0,1\right) $ by the constant (independent
of $\gamma $) $1+g_{2}(1)$, while the right-hand side is bounded from below
away from zero. So, for every $t\geq 0$, 
\begin{equation}
[G(t)]^{\frac{2^{\ast }}{\nu}}\geq c\left( 1+(g_{2}(t))^{2\gamma -1}t^{\frac{%
2^{\ast }}{\nu}-\alpha +2}\right)\,.  \label{34Mbis}
\end{equation}

Since $\nu < \frac{2^{\ast }}{\alpha -2+2\gamma }$, we have $\frac{2^{\ast }%
}{\nu }-\alpha +2> 2\gamma $; by \eqref{ineq1Step7}, \eqref{34Mbis}, we get 
\begin{align}
& \left( \int_{\Omega }\eta ^{2^{\ast }}[1+|Du|^{2\gamma}(g_{2}(\left|
Du\right| ))^{2\gamma -1}]^\nu\,dx\right) ^{2/2^{\ast }}  \notag \\
&\quad\leq \left( \int_{\Omega }\eta ^{2^{\ast }}[1+|Du|^{\frac{2^{\ast }}{%
\nu }-\alpha +2}(g_{2}(\left| Du\right| ))^{2\gamma -1}]^\nu\,dx\right)
^{2/2^{\ast }}  \notag  \label{4.43DMM} \\
& \quad \leq c\int_{\Omega }\left( \eta ^{2}+\left| D\eta \right|
^{2}\right) \left( 1+\left| Du\right| ^{2\gamma }\left( g_{2}(\left|
Du\right| )\right) ^{2\gamma -1}\right) \,dx\,.
\end{align}
Then, under the notation 
\begin{equation*}
V=V(x)=1+\left| Du\right| ^{2\gamma }\left( g_{2}(\left| Du\right| )\right)
^{2\gamma -1},
\end{equation*}
inequality \eqref{4.43DMM} becomes 
\begin{equation}
\left( \int_{\Omega }\eta ^{2^{\ast }}V^{\nu }\,dx\right) ^{2/2^{\ast }}\leq
c\int_{\Omega }\left( \eta ^{2}+\left| D\eta \right| ^{2}\right) V\,dx\,.
\label{4.44DMM}
\end{equation}

We consider a test function $\eta =1$ in $B_{\rho }$ with $|D\eta |\leq 
\frac{2}{R-\rho }$ as in the previous step, thus we get 
\begin{equation}
\left( \int_{B_{\rho }}V^{\nu }\,dx\right) ^{2/2^{\ast }}\leq \frac{c}{%
(R-\rho )^{2}}\int_{B_{R}}V\,dx\,.  \label{4.45DMM}
\end{equation}%
Let $\mu >\frac{2^{\ast }}{2}$ to be chosen later. By H\"{o}lder inequality
we have 
\begin{align}
\left( \int_{B_{\rho }}V^{\nu }\,dx\right) ^{2/2^{\ast }}& \leq \frac{c}{%
(R-\rho )^{2}}\int_{B_{R}}V^{\frac{\nu }{\mu }}V^{1-\frac{\nu }{\mu }}\,dx\,
\notag  \label{4.46DMM} \\
& \leq \left( \int_{B_{R}}V^{\nu }\,dx\right) ^{\frac{1}{\mu }}\left(
\int_{B_{R}}V^{\frac{\mu -\nu }{\mu -1}}\,dx\right) ^{\frac{\mu -1}{\mu }}.
\end{align}%
Let $R_{0}$ and $\rho _{0}$ be fixed. For any $i\in \mathbb{N}$ we consider %
\eqref{4.46DMM} with $R=\rho _{i}$ and $\rho =\rho _{i-1}$, where $\rho
_{i}=R_{0}-\frac{R_{0}-\rho _{0}}{2^{i}}$. By iterating \eqref{4.46DMM}
since $R-\rho =\frac{R_{0}-\rho _{0}}{2^{i}}$, we obtain 
\begin{align}
\int_{B_{\rho _{0}}}V^{\nu }\,dx& \leq \left( \int_{B_{\rho _{i}}}V^{\nu
}\,dx\right) ^{\left( \frac{2^{\ast }}{2\mu }\right)
^{i}}\prod_{i=1}^{\infty }\left( \frac{c4^{i+1}}{(R_{0}-\rho _{0})^{2}}%
\right) ^{\mu \left( \frac{2^{\ast }}{2\mu }\right) ^{i}}  \notag
\label{47M} \\
& \hspace{4cm}\cdot \left( \int_{B_{\rho _{0}}}V^{\frac{\mu -\nu }{\mu -1}%
}\,dx\right) ^{(\mu -1)\left( \frac{2^{\ast }}{2\mu }\right) ^{i}}  \notag \\
& \leq \left( \int_{B_{\rho _{i}}}V^{\nu }\,dx\right) ^{\left( \frac{2^{\ast
}}{2\mu }\right) ^{i}}\,c\,\left( \frac{1}{(R_{0}-\rho _{0})^{2}}\right) ^{%
\frac{2^{\ast }\mu }{2\mu -2^{\ast }}}  \notag \\
& \hspace{4cm}\cdot \left( \int_{B_{\rho _{0}}}V^{\frac{\mu -\nu }{\mu -1}%
}\,dx\right) ^{2^{\ast }\frac{\mu -1}{2\mu -2^{\ast }}}.
\end{align}%
Now we use assumption \eqref{12M}: 
\begin{equation}
V=1+|Du|^{2\gamma }\left( g_{2}(|Du|)\right) ^{2\gamma -1}\leq 2\max
\{1,c\}(1+f(Du))^{\beta },  \label{48M}
\end{equation}%
where, since $\nu \geq 1$, 
\begin{equation}
\beta =\frac{\mu -1}{\mu -\nu }\geq 1.  \label{49M}
\end{equation}%
We recall that \eqref{nu} holds, so we have 
\begin{equation*}
1\leq \beta <\frac{\mu -1}{\mu -\frac{2^{\ast }}{\alpha -2+2\gamma }}.
\end{equation*}%
We compute the limit as $\mu \rightarrow \frac{2^{\ast }}{2}$ of the right
hand side, which, by computations, is equal to $\frac{2(\alpha -2+2\gamma )}{%
n(\alpha -4+2\gamma )}$. While the limit as $\mu \rightarrow +\infty $ is
equal to $1$. Then it is possible to choose $\mu \in \left( \frac{2^{\ast }}{%
2},+\infty \right) $ so that the definition of $\beta $ in \eqref{49M} is
compatible with assumption \eqref{condition on beta}.

\noindent We go to the limit in \eqref{47M} as $i\rightarrow +\infty $ and
we use \eqref{48M} and \eqref{49M} to obtain 
\begin{equation*}
\int_{B_{\rho _{0}}}V^{\nu }\,dx\leq c\left( \frac{1}{(R_{0}-\rho _{0})^{2}}%
\right) ^{\frac{2^{\ast }\mu }{2\mu -2^{\ast }}}\left(
\int_{B_{R_{0}}}(1+f(Du))\,dx\right) ^{\frac{2^{\ast }(\mu -1)}{2\mu
-2^{\ast }}}.
\end{equation*}%
Then we get 
\begin{align}
\int_{B_{\rho _{0}}}V\,dx& \leq |B_{\rho _{0}}|^{1-\frac{1}{\nu }}\left(
\int_{B_{\rho _{0}}}V^{\nu }\,dx\right) ^{\frac{1}{\nu }}  \notag
\label{50M} \\
& \leq c\left( \frac{1}{(R_{0}-\rho _{0})^{2}}\right) ^{\frac{2^{\ast }\mu }{%
(2\mu -2^{\ast })\nu }}\left( \int_{B_{R_{0}}}(1+f(Du))\,dx\right) ^{\frac{%
2^{\ast }(\mu -1)}{(2\mu -2^{\ast })\nu }}.
\end{align}%
Therefore, with the notation $\theta _{0}=2\frac{2^{\ast }\mu }{(2\mu
-2^{\ast })\nu }$, there exists a constant $c=c(n,\alpha ,\beta ,\gamma )$
such that 
\begin{equation}
\int_{B_{\rho _{0}}}\left\{ 1+|Du|^{2\gamma }\left( g_{2}(|Du|)\right)
^{2\gamma -1}\right\} \,dx  \label{51M}
\end{equation}%
\begin{equation*}
\leq \frac{c}{(R_{0}-\rho _{0})^{\theta _{0}}}\left( \int_{B_{R_{0}}}\left\{
1+f(Du)\right\} \,dx\right) ^{\frac{2^{\ast }(\mu -1)}{(2\mu -2^{\ast })\nu }%
}.
\end{equation*}%
By (\ref{43M}) finally we deduce that for all $\rho _{0}$, $R_{0}$ with $%
0<\rho _{0}<R_{0}$ 
\begin{equation}
\left\Vert Du\right\Vert _{L^{\infty }\left( B_{\rho _{0}};\mathbb{R}%
^{n}\right) }^{2}\leq \frac{c}{\left( R_{0}-\rho _{0}\right) ^{\theta _{2}}}%
\left( \int_{B_{R_{0}}}\left\{ 1+f\left( Du\right) \right\} \,dx\right)
^{\theta _{1}},  \label{final estimate}
\end{equation}%
where we have used the notation 
\begin{equation}
\theta _{1}=\tfrac{2^{\ast }(\mu -1)}{(2\mu -2^{\ast })\nu }\cdot \tfrac{%
2^{\ast }-2}{2^{\ast }-\alpha -2(\gamma -1)}\;\;\;\text{and}\;\;\;\theta
_{2}=\theta _{0}\tfrac{2^{\ast }-2}{2^{\ast }-\alpha -2(\gamma -1)}\,.
\label{52M}
\end{equation}%
We observe that $\theta _{1}>1$; indeed $\tfrac{2^{\ast }-2}{2^{\ast
}-\alpha -2(\gamma -1)}\geq 1$ since $\alpha \geq 2$ and $\gamma \geq 1$.
The other factor of $\theta _{1}$ is strictly greater than $1$. In fact,
being $\alpha \geq 2$ and $\gamma \geq 1$, we have that $\nu <\tfrac{2^{\ast
}}{\alpha -2+2\gamma }\leq \tfrac{2^{\ast }}{2}.$ Then we deduce 
\begin{equation}
\tfrac{2^{\ast }(\mu -1)}{(2\mu -2^{\ast })\nu }>\tfrac{2(\mu -1)}{2\mu
-2^{\ast }}>1\,.  \label{theta3}
\end{equation}

\subsection{Step 8. $W^{2,2}-$estimate}

We are ready to prove the $W^{2,2}-$estimate 
\eqref{bound on the second
derivatives}. We start from (\ref{25M}). At the beginning of our proof we
choose the test function $\varphi =\eta ^{2}u_{x_{k}}\Phi \left( \left\vert
Du\right\vert \right) $, with $\eta \in C^{1}\left( \Omega \right) $ with
compact support in $\Omega $, $\Phi :\left[ 0,+\infty \right) \rightarrow %
\left[ 0,+\infty \right) $ is a generic nonnegative, increasing, locally
Lipschitz continuous function in $\left[ 0,+\infty \right) $. Here, in the
estimate in (\ref{25M}), we simply consider the case when $\Phi $ is the
constant identically equal to $1$. We obtain 
\begin{align*}
\int_{\Omega }\eta ^{2}g_{1}\left( \left\vert Du\right\vert \right)
\,\left\vert D^{2}u\right\vert ^{2}\,dx\leq & 8Mn\int_{\Omega }\eta
\left\vert D\eta \right\vert \left\vert Du\right\vert ^{1+\gamma }\left(
g_{2}\left( \left\vert Du\right\vert \right) \right) ^{\gamma }\,dx\, \\
& +8(Mn)^{2}\int_{\Omega }\eta ^{2}\left( 1+\left\vert Du\right\vert
^{\gamma }\right) ^{2}\left( g_{2}\left( \left\vert Du\right\vert \right)
\right) ^{2\gamma -1}\,dx \\
& +8\int_{\Omega }\left\vert D\eta \right\vert ^{2}g_{2}\left( \left\vert
Du\right\vert \right) \left\vert Du\right\vert ^{2}\,dx.
\end{align*}%
Since $\gamma \geq 1$, in the integrands in the right hand side we can use
the inequalities for a constant $c\geq 1$ 
\begin{equation*}
\left\{ 
\begin{array}{l}
\left\vert Du\right\vert ^{1+\gamma }\text{,}\;\;\left( 1+\left\vert
Du\right\vert ^{\gamma }\right) ^{2}\;\;\text{and}\;\;\left\vert
Du\right\vert ^{2}\leq c\left( 1+\left\vert Du\right\vert ^{2\gamma }\right)
\\ 
\left( g_{2}\left( \left\vert Du\right\vert \right) \right) ^{\gamma }\;\;%
\text{and}\;\;g_{2}\left( \left\vert Du\right\vert \right) \leq 1+\left(
g_{2}\left( \left\vert Du\right\vert \right) \right) ^{2\gamma -1} \\ 
\eta \left\vert D\eta \right\vert \text{,}\;\;\eta ^{2}\;\;\text{and}%
\;\;\left\vert D\eta \right\vert ^{2}\leq \left( \eta +\left\vert D\eta
\right\vert \right) ^{2}%
\end{array}%
\right.
\end{equation*}%
and, for a new positive constant $c$, we get 
\begin{align}
\int_{\Omega }& \eta ^{2}g_{1}\left( \left\vert Du\right\vert \right)
\,\left\vert D^{2}u\right\vert ^{2}\,dx  \notag
\label{second derivatives estimate 2} \\
& \leq c\int_{\Omega }\left( \eta +\left\vert D\eta \right\vert \right)
^{2}\left( 1+\left\vert Du\right\vert ^{2\gamma }\right) \left( 1+\left(
g_{2}\left( \left\vert Du\right\vert \right) \right) ^{2\gamma -1}\right)
\,dx\,.
\end{align}%
The function $g_{2}:\left[ 0,+\infty \right) \rightarrow \left[ 0,+\infty
\right) $ is nonnegative, increasing and, as in \eqref{g(1)}, $g_{2}(t)\geq
g_{2}(1)\geq 1$ for all $t\geq 1$; then 
\begin{equation*}
\left\vert Du\right\vert ^{2\gamma }\;\;\text{and}\;\;\left( g_{2}\left(
\left\vert Du\right\vert \right) \right) ^{2\gamma -1}\leq 1+\left\vert
Du\right\vert ^{2\gamma }\left( g_{2}\left( \left\vert Du\right\vert \right)
\right) ^{2\gamma -1}.
\end{equation*}%
Thus from (\ref{second derivatives estimate 2}), for a further constants $%
c^{\prime }$ we deduce 
\begin{align}
\int_{\Omega }& \eta ^{2}g_{1}\left( \left\vert Du\right\vert \right)
\,\left\vert D^{2}u\right\vert ^{2}\,dx  \notag
\label{second derivatives estimate 3} \\
& \leq c^{\prime }\int_{\Omega }\left( \eta +\left\vert D\eta \right\vert
\right) ^{2}\left( 1+\left\vert Du\right\vert ^{2\gamma }\left( g_{2}\left(
\left\vert Du\right\vert \right) \right) ^{2\gamma -1}\right) \,dx\,.
\end{align}%
We consider concentric balls $B_{R}$, $B_{\left( R+\rho \right) /2}$ and $%
B_{\rho }$ compactly contained in $\Omega $, with $0<\rho <\frac{R+\rho }{2}%
<R<R_{1}$. As usual we also consider a test function $\eta \in
C_{0}^{1}\left( B_{R}\right) $, $0\leq \eta \leq 1$ in $B_{R}$, $\eta =1$ in 
$B_{\rho }$ and $\eta =0$ out of $B_{\left( R+\rho \right) /2}$, with
pointwise gradient bound $\left\vert D\eta \right\vert \leq 4/\left( R-\rho
\right) $ in $B_{\left( R+\rho \right) /2}\backslash B_{\rho }$ and of
course in all $B_{R}$ too. By (\ref{second derivatives estimate 3}) we have 
\begin{equation*}
\int_{B_{\rho }}g_{1}\left( \left\vert Du\right\vert \right) \,\left\vert
D^{2}u\right\vert ^{2}\,dx\leq c^{\prime }\left( 1+\tfrac{4}{R-\rho }\right)
^{2}\int_{B_{\frac{R+\rho }{2}}}\left( 1+\left\vert Du\right\vert ^{2\gamma
}\left( g_{2}\left( \left\vert Du\right\vert \right) \right) ^{2\gamma
-1}\right) \,dx\,.
\end{equation*}%
Since $R\leq R_{1}$, then $\tfrac{4}{R-\rho }\geq \tfrac{4}{R_{1}}$ and thus 
$1\leq \tfrac{R_{1}}{R-\rho }$. We obtain 
\begin{equation}
\int_{B_{\rho }}g_{1}\left( \left\vert Du\right\vert \right) \,\left\vert
D^{2}u\right\vert ^{2}\,dx\leq \tfrac{c^{\prime \prime }\left(
R_{1}^{2}+4\right) ^{2}}{\left( R-\rho \right) ^{2}}\int_{B_{\frac{R+\rho }{2%
}}}\left( 1+\left\vert Du\right\vert ^{2\gamma }\left( g_{2}\left(
\left\vert Du\right\vert \right) \right) ^{2\gamma -1}\right) \,dx.
\label{second derivatives estimate 5}
\end{equation}%
We rewrite \eqref{51M} in the balls $B_{\left( R+\rho \right) /2}$ and $%
B_{R} $, being (with the notation there) $R_{0}-\rho _{0}=R-\frac{R+\rho }{2}%
=\frac{R-\rho }{2}$, 
\begin{equation}
\int_{B_{\frac{R+\rho }{2}}}\left[ 1+|Du|^{2\gamma }\left(
g_{2}(|Du|)\right) ^{2\gamma -1}\right] \,dx\leq \frac{2^{\theta _{0}}c}{%
(R-\rho )^{\theta _{0}}}\left( \int_{B_{R}}(1+f(Du))\,dx\right) ^{\frac{%
2^{\ast }(\mu -1)}{(2\mu -2^{\ast })\nu }}.
\label{second derivatives estimate 6}
\end{equation}%
By combining (\ref{second derivatives estimate 5}),(\ref{second derivatives
estimate 6}) we finally get 
\begin{equation*}
\int_{B_{\rho }}g_{1}\left( \left\vert Du\right\vert \right) \,\left\vert
D^{2}u\right\vert ^{2}\,dx\leq \frac{c^{\prime \prime \prime }}{(R-\rho
)^{2+\theta _{0}}}\left( \int_{B_{R}}(1+f(Du))\,dx\right) ^{\frac{2^{\ast
}(\mu -1)}{(2\mu -2^{\ast })\nu }}.
\end{equation*}%
for a constant $c^{\prime \prime \prime }$. Therefore the $W_{\mathrm{loc}%
}^{2,2}\left( \Omega \right) -$bound stated in (\ref{bound on the second
derivatives}) is obtained, with exponent $\theta _{3}=\frac{2^{\ast }(\mu -1)%
}{(2\mu -2^{\ast })\nu }$, which is greater than $1$ (see \eqref{theta3}).

\section*{Acknowledgements}

A. Nastasi and P. Marcellini are members of the \textit{Gruppo Nazionale per
l'Analisi Matematica, la Probabilit\`{a} e le loro Applicazioni} (GNAMPA) of
the Istituto Nazionale di Alta Matematica (INdAM) and they were partly
supported by GNAMPA-INdAM Project 2023 \textit{Regolarit\`{a} per problemi
ellittici e parabolici con crescite non standard}, CUP E53C22001930001.%
\newline
\noindent This research was conducted while C. Pacchiano Camacho was at the
Okinawa Institute of Science and Technology (OIST) through the Theoretical
Sciences Visiting Program (TSVP).\newline
\noindent Part of this material was obtained when P. Marcellini and C.
Pacchiano Camacho visited University of Palermo, Italy in July 2024. The
visit was partially supported by The Best Paper Award 2023 Prize Fundings
assigned by the Department of Engeneering of University of Palermo to A.
Nastasi. \bigskip

\end{document}